\def\eps{\varepsilon }
\def\RR{\mathbb R}
\def\ZZ{\mathbb Z}
\def\RRR{\overline{\mathbb R}}
\def\pp{\mathcal P}
\def\qq{\mathcal Q}
\def\mm{\mathcal M}
\def\mmm{\overline{\mathcal M}}
\def\aa{\mathcal A}
\newcommand{\set}[1]{\left\lbrace #1\right\rbrace}
\providecommand{\abs}[1]{\left\lvert#1\right\rvert}
\providecommand{\norm}[1]{\left\lVert#1\right\rVert}
\newcommand{\remove}[1]{ }
\newcommand{\qtq}[1]{\quad\text{#1}\quad}
\DeclareMathOperator{\med}{med}
\DeclareMathOperator{\sign}{sign}
\newtheorem{theorem}{Theorem}[section]
\newtheorem{proposition}[theorem]{Proposition}
\newtheorem{lemma}[theorem]{Lemma}
\newtheorem{corollary}[theorem]{Corollary}
\newtheorem*{LA}{Lemma A}
\newtheorem*{LB}{Lemma B}
\newtheorem*{LC}{Lemma C}
\theoremstyle{remark}
\newtheorem*{remark}{Remark}
\newtheorem*{example}{Example}
\newtheorem*{examples}{Examples}
\numberwithin{equation}{section}
\begin{document}
\title[Lebesgue integral]{A simplified construction of the Lebesgue integral}
\remove{\author[V. Komornik]{Vilmos Komornik} 
\address{16 rue de Copenhague\\
         67000 Strasbourg\\
         France}
\email{vilmos.komornik@gmail.com}}
\author{Vilmos Komornik} 
\address{Département de mathématique\\
         Université de Strasbourg\\
         7 rue René Descartes\\
         67084 Strasbourg Cedex, France}
\email{komornik@math.unistra.fr}
\keywords{Lebesgue integral, Riesz-Daniell approach, generalized Beppo Levi theorem, Fubini--Tonelli theorem}
\date{Version 2018-05-18}

\begin{abstract}
We present a modification of Riesz's construction of the Lebesgue integral, leading directly to finite or infinite integrals, at the same time simplifying the proofs.
\end{abstract}
\maketitle

\section{Introduction}\label{s1}

Among the many approaches to the Lebesgue integral that of Riesz \cite{Rie1949B16,Rie1949B18,Riesz-Nagy-1952} is probably the shortest and most elementary. 
As Daniell's abstract method \cite{Daniell-1917}, it is motivated by the research of weak sufficient conditions ensuring the relation $\int f_n\ dx\to\int f\ dx$ for pointwise convergent sequences $f_n\to f$.
Based on two elementary lemmas concerning monotone sequences of step functions, the functions having a finite Lebesgue integral are constructed in two steps.
It is completed by introducing measurable functions having infinite integrals.

This theory has regained popularity recently; see, e.g., Bear \cite{Bear-2002}, Chae \cite{Chae-1995}, Johnston \cite{Johnston-2015}, Roselli \cite{Roselli-2001}.

We modify this method by defining directly the functions having a finite or infinite integral.
Technically we postpone the use of the crucial ``Lemma B'' of Riesz  to the second stage of the construction.
This leads to even shorter and more transparent proofs, and yields the Lebesgue measure at once as a byproduct.
In our approach the fundamental lemmas have a symmetric use: just as Lemma A justifies the correctness of the first step of the extension of the integral, Lemma B plays the same role for the second step.

For clarity the theory is presented in Sections \ref{s2}--\ref{s6} for functions defined on the real line. 
In Sections \ref{s7}--\ref{s8} we extend the results to arbitrary measure spaces and we investigate product measures.
In the final Section \ref{s9} we explain that various difficulties and counter-examples disappear if we return to Riesz's natural definition of measurability and to Fr\'echet's $\sigma$-ring framework. 
We omit in the main text some proofs that remain the same as in the classical setting;   for the convenience of the reader they are recalled in an Appendix.

The author is grateful to L\'aszl\'o Cz\'ach for stimulating discussions and for indicating him the last examples of the paper.

\section{Integral of step functions}\label{s2}

As usual, $N\subset\RR$ is called a \emph{null set} if for each $\eps>0$ it has a countable cover by intervals of total length $<\eps$.
We say that a property holds \emph{almost everywhere} (a.e.) if it holds outside a null set. 

We identify two functions if they are equal almost everywhere, i.e., we are working with equivalence classes of functions.
Accordingly, we write $f=g$, $f\le g$, $f\ge g$, $f_n\to f$, $f_n\nearrow f$ or $f_n\searrow f$ if these relations hold a.e.
(Sometimes we keep the words ``a.e.'' for clarity.)

We denote by $\chi_A$ the characteristic function of the set $A$.
By a \emph{step function} we mean a function having a representation of the form
\begin{equation*}
\varphi=\sum_{i=1}^mc_i\chi_{I_i}
\end{equation*}
with \emph{bounded intervals} $I_i$ and real numbers $c_i$.
We define its \emph{integral} by the formula 
\begin{equation*}
\int\varphi\ dx:=\sum_{i=1}^mc_i\abs{I_i}
\end{equation*}
where $\abs{I_i}$ denotes the length of $I_i$.

\begin{proposition}\label{p21}\mbox{}
\begin{enumerate}[\upshape (i)]
\item The step functions form a vector lattice $R_0$.
\item $\int\varphi\ dx$ does not depend on the particular representation of $\varphi$.
\item The integral is a positive linear functional on $R_0$.
\end{enumerate}
\end{proposition}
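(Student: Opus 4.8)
The plan is to reduce everything to a single normal form: representing any step function, modulo a null set, as a finite linear combination of characteristic functions of \emph{pairwise disjoint} intervals, on which the integral can be read off directly. The one genuinely measure-theoretic ingredient I will need is that a bounded interval of positive length is \emph{not} a null set; this is immediate from Heine--Borel, since any countable cover of a compact subinterval by open intervals has a finite subcover whose total length already exceeds the length of the subinterval.

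First I would set up the refinement. Given $\varphi=\sum_{i=1}^m c_i\chi_{I_i}$, the finitely many endpoints of the $I_i$ split the line into finitely many open intervals $K_1,\dots,K_r$ together with the endpoints themselves. Since a finite set of points is null, each $\chi_{I_i}$ equals $\sum_{l\,:\,K_l\subseteq I_i}\chi_{K_l}$ a.e., so $\varphi=\sum_{l=1}^r a_l\chi_{K_l}$ a.e.\ with $a_l=\sum_{i\,:\,K_l\subseteq I_i}c_i$. Because length is additive over this partition and the endpoints have length zero, a purely combinatorial computation gives $\sum_i c_i\abs{I_i}=\sum_l a_l\abs{K_l}$; thus the defining sum is unchanged when we pass to the disjoint form.

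This immediately yields (ii) and the substance of (iii). For (ii), if two representations define the same class, their difference is a step function $\psi$ with $\psi=0$ a.e.; putting $\psi=\sum_l a_l\chi_{K_l}$ in disjoint form, each $K_l$ has positive length and is therefore not null, so $\psi=a_l$ on a non-null set forces $a_l=0$, whence $\int\psi\ dx=0$ and the two integrals coincide. Linearity in (iii) is then essentially formal: concatenating the (scaled) representations of $\varphi$ and $\psi$ exhibits $\alpha\varphi+\beta\psi$ as a step function whose defining sum is $\alpha\int\varphi\ dx+\beta\int\psi\ dx$, which is legitimate by (ii). Positivity follows from the same disjoint-form argument: if $\varphi\ge0$ a.e., then on each positive-length $K_l$ the constant value $a_l$ must be $\ge0$, so $\int\varphi\ dx=\sum_l a_l\abs{K_l}\ge0$ (and monotonicity then follows by linearity).

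For the lattice structure in (i), closure under linear combinations is clear from the displayed form, so the real content is closure under $\vee$ and $\wedge$. Here I would use $\varphi\vee\psi=\tfrac12(\varphi+\psi+\abs{\varphi-\psi})$ and $\varphi\wedge\psi=\tfrac12(\varphi+\psi-\abs{\varphi-\psi})$, reducing matters to showing $\abs{\varphi}\in R_0$; writing $\varphi=\sum_l a_l\chi_{K_l}$ in disjoint form gives $\abs{\varphi}=\sum_l\abs{a_l}\chi_{K_l}$, again a step function. I expect the main obstacle to be the bookkeeping in the refinement step — treating open, closed, and half-open intervals uniformly and checking the combinatorial identity $\sum_i c_i\abs{I_i}=\sum_l a_l\abs{K_l}$ — together with the single substantive point that a positive-length interval is not null, which is precisely what prevents the a.e.\ identification from collapsing the integral into something ill-defined.
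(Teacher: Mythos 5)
Your proof is correct and follows exactly the route the paper intends: it carries out the ``elementary and well-known'' disjoint-refinement argument that the paper leaves to the reader, and it isolates the same single substantive point the paper's proof cites, namely Borel's theorem that a non-degenerate interval is not a null set, which is what makes (ii) (and hence positivity) survive the identification of a.e.-equal functions. No gaps; the only bookkeeping worth noting is that the two unbounded complementary intervals in your refinement carry coefficient $0$ and can be discarded.
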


\begin{proof}
The proof is elementary and well-known if we consider individual functions and not equivalence classes.
In the latter case (ii) follows from Borel's theorem stating that the non-degen\-erate intervals are \emph{not} null sets.
\end{proof}

In the next two sections we extend the integral in two steps to larger function classes $R_1$  and $R_2$.
We recall without proof two elementary results of Riesz: they will ensure the correctness of these extensions to $R_1$  and $R_2$, respectively.\footnote{For the convenience of the reader we present the missing classical proofs of Sections \ref{s2}--\ref{s8} in an appendix at the end of the paper.}

\begin{LA}\label{lA}
If $(\varphi _n)\subset R_0$ and $\varphi _n(x)\searrow 0$, then $\int \varphi _n\ dx\searrow 0$.
\end{LA}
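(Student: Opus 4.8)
The plan is to use a Dini-type compactness argument, the only real subtleties being that the $\varphi_n$ are assumed to decrease to $0$ only \emph{almost everywhere} and that step functions are discontinuous at finitely many points. First I would note that the sequence $\int\varphi_n\ dx$ is nonincreasing and bounded below by $0$ (since $\varphi_n\ge 0$ a.e.), hence converges to some limit $L\ge 0$; the whole problem is to show $L=0$. Because $0\le\varphi_n\le\varphi_1$ a.e., all the $\varphi_n$ are supported, up to a null set, in a fixed bounded interval $[a,b]$ containing the support of $\varphi_1$, and are dominated by the constant $C:=\sup\varphi_1$. Fixing $\eps>0$, it then suffices to produce an $n_0$ with $\int\varphi_n\ dx\le\eps(b-a+C)$ for all $n\ge n_0$, since letting $\eps\to 0$ forces $L=0$.

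Next I would isolate a single null set absorbing all the bad behavior. Let $E$ be the union of the null set on which $\varphi_n(x)\searrow 0$ fails together with the set of all endpoints of all intervals occurring in fixed representations of the $\varphi_n$; the latter is countable, hence null, so $E$ is null and can be covered by countably many open intervals $J_1,J_2,\dots$ of total length $<\eps$. For each point $x\in[a,b]\setminus E$, each $\varphi_n$ is locally constant near $x$ (as $x$ avoids all endpoints) and $\varphi_n(x)\to 0$, so choosing an index $n_x$ with $\varphi_{n_x}(x)<\eps$ yields an open interval $U_x\ni x$ on which $\varphi_{n_x}<\eps$; by monotonicity $\varphi_n<\eps$ on $U_x$ for all $n\ge n_x$. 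The open intervals $\set{U_x}$ and $\set{J_k}$ together cover the compact set $[a,b]$.

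Finally I would invoke compactness to extract a finite subcover $U_{x_1},\dots,U_{x_p},J_{k_1},\dots,J_{k_q}$ and set $n_0:=\max(n_{x_1},\dots,n_{x_p})$. For $n\ge n_0$ we have $\varphi_n<\eps$ on the union of the $U_{x_i}$ and $\varphi_n\le C$ everywhere; splitting $[a,b]$ into the portion covered by the $U_{x_i}$ and the remaining portion, which lies inside $\bigcup_j J_{k_j}$ of total length $<\eps$, gives $\int\varphi_n\ dx\le\eps(b-a)+C\eps$, as required. The step I expect to require the most care is this bookkeeping at the exceptional set: one must verify that the a.e.\ hypothesis and the jump discontinuities genuinely contribute nothing, which works precisely because both live on a null set that can be buried inside an $\eps$-short cover while the integrand remains uniformly bounded by $C$ there.
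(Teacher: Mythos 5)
Your argument is correct and is essentially the paper's own proof: the paper likewise restricts to a compact interval $[a,b]$, buries the exceptional null set (the discontinuity points together with the set where the monotone convergence fails) in an open cover of total length $<\eps$, and then invokes Dini's theorem on the remaining compact set --- you have merely unfolded Dini's theorem into its explicit finite-subcover argument. The one point deserving a word is your claim that $\varphi_n\le\varphi_{n_{x_i}}<\eps$ holds on all of $U_{x_i}$ for $n\ge n_{x_i}$: the a.e.\ monotonicity may fail at points of $U_{x_i}\cap E$, but for step functions such an exceptional set is a null finite union of intervals, hence a finite set of points contributing nothing to the integral, so the final estimate stands.
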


\begin{LB}\label{lB}
If $(\varphi _n)\subset R_0$, $\varphi _n(x)\nearrow f$ and $\sup\int\varphi_n\ dx<\infty$, then $f$ is finite a.e.
\end{LB}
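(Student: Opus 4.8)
The plan is to show that the set $E=\set{x:f(x)=+\infty}$ is null, using only the positivity of the integral on $R_0$ (Proposition \ref{p21}) and the very definition of null sets; Lemma A is not needed. First I would reduce to nonnegative functions: since the sequence is increasing, $\varphi_n-\varphi_1\ge 0$, it still increases to $f-\varphi_1$, and $\int(\varphi_n-\varphi_1)\ dx=\int\varphi_n\ dx-\int\varphi_1\ dx$ stays bounded by some $M<\infty$. As $\varphi_1$ is a step function, hence finite, $f$ is finite a.e. if and only if $f-\varphi_1$ is. Choosing suitable representatives (e.g. replacing $\varphi_n$ by $\max(\varphi_1,\dots,\varphi_n)-\varphi_1$, which differs from it only on a null set), I may thus assume that $0\le\varphi_1\le\varphi_2\le\cdots$ everywhere and $\int\varphi_n\ dx\le M$ for all $n$.

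The key step is a Chebyshev-type estimate on the superlevel sets. Fix $\lambda>0$ and set $A_n:=\set{x:\varphi_n(x)>\lambda}$. Since $\varphi_n$ is a step function, each $A_n$ is a finite union of bounded intervals, so $\chi_{A_n}\in R_0$, and the $A_n$ increase with $n$. From $0\le\lambda\chi_{A_n}\le\varphi_n$ and the positivity and linearity of the integral I get $\lambda\int\chi_{A_n}\ dx\le\int\varphi_n\ dx\le M$, that is, the total length of $A_n$ is at most $M/\lambda$, uniformly in $n$.

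It remains to convert this uniform bound into a cover of $A:=\bigcup_nA_n=\set{x:f(x)>\lambda}\supseteq E$ by intervals of small total length. Writing $A$ as the disjoint union of the increments $A_n\setminus A_{n-1}$ (with $A_0=\emptyset$), each of which is again a finite union of bounded intervals, the total length of the intervals appearing up to index $n$ equals the total length of $A_n$, hence is $\le M/\lambda$. Since these lengths are nonnegative with bounded partial sums, the whole countable family of intervals covering $A$ has total length $\le M/\lambda$. Thus $E\subseteq A$ is covered by intervals of total length $\le M/\lambda$; given $\eps>0$, taking $\lambda>M/\eps$ makes this $<\eps$, and as $\eps$ is arbitrary $E$ is null.

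The routine parts are the reduction and the Chebyshev inequality; the step I expect to require the most care is the last one, namely the bookkeeping with finite unions of (possibly overlapping) intervals---passing from the uniformly bounded total lengths of the increasing sets $A_n$ to a single countable cover of their union with the same length bound---together with keeping track of the null sets introduced by the a.e.\ convention.
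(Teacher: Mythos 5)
Your proposal is correct and follows essentially the same route as the paper's proof: after the same reduction to a nonnegative increasing sequence, the paper also applies a Chebyshev-type bound to the superlevel set $\set{f>A/\eps}$, decomposed into the disjoint increments $E_{\eps,n}=\set{x:\varphi_n(x)>A/\eps\ge\varphi_{n-1}(x)}$ (your $A_n\setminus A_{n-1}$), each a finite union of intervals whose cumulative length is controlled by $\int\varphi_m\ dx\le A$. The only cosmetic difference is that you bound the length of each $A_n$ first and then observe the increment lengths telescope, whereas the paper bounds the partial sums of increment lengths directly; the bookkeeping you flagged as delicate is handled there in exactly the way you describe.
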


\section{The $R_1$ function class}\label{s3}

A function $f:\RR\to\RRR$ is said to belong to $R_1$ if there exists a sequence $(\varphi _n)\subset R_0$ satisfying $\varphi_n\nearrow f$, and we set
\begin{equation*}
\int f\ dx:=\lim \int \varphi _n\ dx.
\end{equation*}
Proposition \ref{p31} (i), (ii) below will show that the definition of the integral is correct.

Riesz used instead of $R_1$ the smaller class 
\begin{equation*}
C_1:=\set{f\in R_1\ :\ \int f\ dx\text{ is finite}}.
\end{equation*}
The elements of $R_1$ may have infinite integrals, and they may even take the value $\infty$ on non-null sets.
Nevertheless, we show that the usual properties of $C_1$ continue to hold in $R_1$ by essentially the same proofs.

\begin{proposition}\label{p31}\mbox{}
\begin{enumerate}[\upshape (i)]
\item $\int f\ dx$ does not depend on the particular choice of $(\varphi _n)$.
\item The integral on $R_1$ is an extension of the integral on $R_0$.
\item If $f,g\in R_1$ and $f\le g$, then $\int f\ dx\le \int g\ dx$.
\item If $f,g\in R_1$ and $c$ is a \emph{nonnegative} real number, then\footnote{We use the conventions $0\cdot(\pm\infty):=0$.}
\begin{equation}\label{31}
cf,\quad  f+g,\quad  \min\set{f,g}\qtq{and}\max\set{f,g}
\end{equation}
also belong to $R_1$, and
\begin{equation*}
\int cf\ dx=c\int f\ dx,\quad 
\int f+g\ dx=\int f\ dx+\int g\ dx.
\end{equation*}
\end{enumerate}
\end{proposition}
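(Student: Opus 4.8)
The plan is to base everything on a single comparison principle extracted from Lemma A, and then read off the four assertions. I first record that the defining limit makes sense: since the $R_0$-integral is positive (Proposition~\ref{p21}(iii)) and $\varphi_{n+1}-\varphi_n\ge0$, the sequence $\int\varphi_n\ dx$ is nondecreasing, so $\lim\int\varphi_n\ dx$ exists in $\RRR$. I also note that every $f\in R_1$ satisfies $f\ge\varphi_1>-\infty$ a.e., so elements of $R_1$ never take the value $-\infty$; in particular all the combinations in \eqref{31} are defined a.e.

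The heart of the matter is the following claim: \emph{if} $(\varphi_n),(\psi_n)\subset R_0$ with $\varphi_n\nearrow f$, $\psi_n\nearrow g$ and $f\le g$, \emph{then} $\lim\int\varphi_n\ dx\le\lim\int\psi_n\ dx$. To prove it I fix $m$ and consider $\max\set{\varphi_m-\psi_n,0}$, which lies in $R_0$ because $R_0$ is a vector lattice (Proposition~\ref{p21}(i)). As $n\to\infty$ we have $\psi_n\nearrow g\ge f\ge\varphi_m$, hence $\max\set{\varphi_m-\psi_n,0}\searrow0$ a.e., and Lemma A gives $\int\max\set{\varphi_m-\psi_n,0}\ dx\searrow0$. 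Since $\varphi_m-\psi_n\le\max\set{\varphi_m-\psi_n,0}$, positivity of the $R_0$-integral yields $\int\varphi_m\ dx-\int\psi_n\ dx\le\int\max\set{\varphi_m-\psi_n,0}\ dx$; letting first $n\to\infty$ and then $m\to\infty$ proves the claim. Assertion (iii) is this claim verbatim; applying it in both directions to two sequences increasing to the same limit gives (i); and taking the constant sequence $\varphi_n=\varphi$ gives (ii).

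For (iv) I only have to transport monotone increasing convergence through the lattice and linear operations on $R_0$. With $\varphi_n\nearrow f$, $\psi_n\nearrow g$ and $c\ge0$ one checks directly that $c\varphi_n\nearrow cf$, $\varphi_n+\psi_n\nearrow f+g$, $\min\set{\varphi_n,\psi_n}\nearrow\min\set{f,g}$ and $\max\set{\varphi_n,\psi_n}\nearrow\max\set{f,g}$, where each approximating function again belongs to $R_0$; this already shows that the four functions in \eqref{31} lie in $R_1$. Passing to the limit in the identities $\int c\varphi_n\ dx=c\int\varphi_n\ dx$ and $\int\varphi_n+\psi_n\ dx=\int\varphi_n\ dx+\int\psi_n\ dx$ then yields the two stated formulas; the additivity of the limit is legitimate because all the limits involved lie in $(-\infty,+\infty]$, so no indeterminate form $\infty-\infty$ occurs. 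The only genuine obstacle is the comparison claim above—choosing the truncation $\max\set{\varphi_m-\psi_n,0}$ so that Lemma A applies, and keeping the extended-real bookkeeping clean so that sums stay well defined; everything after that is routine.
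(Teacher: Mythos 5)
Your proposal is correct and follows essentially the same route as the paper: your ``comparison claim'' is precisely the paper's Lemma~\ref{l32}, proved the same way by fixing $m$ and applying Lemma~A to $(\varphi_m-\psi_n)^+\searrow 0$, and parts (i)--(iv) are then deduced exactly as in the text (constant sequences for (ii), transport of the lattice and linear operations through monotone limits for (iv)). The extra bookkeeping you add --- existence of the limit in $\RRR$, the lower bound $f\ge\varphi_1$, and the absence of $\infty-\infty$ --- is sound and merely makes explicit what the paper leaves implicit.
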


\begin{proof}
(i) and (iii) follow from Lemma \ref{l32} below. 
(ii) follows from (i) by using constant sequences of step functions.
For (iv) we observe that if  $(\varphi_n), (\psi_n)\subset R_0$, $\varphi_n\nearrow f$ and $\psi_n\nearrow g$, then  the sequences
\begin{equation*}
(c\varphi_n),\quad 
(\varphi_n+\psi_n),\quad 
(\min\set{\varphi_n,\psi_n})\qtq{and}
(\max\set{\varphi_n,\psi_n})
\end{equation*} 
are non-decreasing, belong to $R_0$, and tend to the sequences in \eqref{31}.
The equalities follow from the linearity of the integral on $R_0$.
\end{proof}

\begin{lemma}\label{l32}
Let $(\varphi_n), (\psi_n)\subset R_0$ and $\varphi_n\nearrow f$, $\psi_n\nearrow g$.
If $f\le g$, then 
\begin{equation*}
\lim\int \varphi _n\ dx\le\lim\int \psi _n\ dx.
\end{equation*}
\end{lemma}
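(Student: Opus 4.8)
The plan is to reduce the statement to Lemma A. Since both sides of the claimed inequality are limits of monotone (hence convergent in $\RRR$) sequences, it suffices to fix an index $m$ and to prove $\int\varphi_m\ dx\le\lim\int\psi_n\ dx$; letting $m\to\infty$ afterwards yields the lemma, because each term on the left is bounded by the right-hand side.

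To this end, for the fixed $m$ I would introduce the auxiliary sequence
\begin{equation*}
\theta_n:=\max\set{\varphi_m-\psi_n,0}=\varphi_m-\min\set{\varphi_m,\psi_n},\qquad n=1,2,\dots
\end{equation*}
Each $\theta_n$ belongs to $R_0$ because the step functions form a vector lattice by Proposition \ref{p21}(i). Since $\psi_n\nearrow g$, the functions $\min\set{\varphi_m,\psi_n}$ increase a.e.\ to $\min\set{\varphi_m,g}$; and because $\varphi_m\le f\le g$ a.e., this limit equals $\varphi_m$ a.e. Consequently $\theta_n\searrow 0$ a.e., so Lemma A gives $\int\theta_n\ dx\searrow 0$.

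It then remains to compare the integrals. From the pointwise inequality $\varphi_m-\psi_n\le\theta_n$ together with the positivity and linearity of the integral on $R_0$ (Proposition \ref{p21}(iii)) we obtain
\begin{equation*}
\int\varphi_m\ dx-\int\psi_n\ dx\le\int\theta_n\ dx.
\end{equation*}
Letting $n\to\infty$, the right-hand side tends to $0$ while $\int\psi_n\ dx\nearrow\lim\int\psi_n\ dx$, whence $\int\varphi_m\ dx\le\lim\int\psi_n\ dx$, as required.

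The delicate point is the identification $\min\set{\varphi_m,g}=\varphi_m$, which is precisely where the hypothesis $f\le g$ is used (via $\varphi_m\le f\le g$), and the correct order of the two limits: one must first let $n\to\infty$ for each fixed $m$, and only then let $m\to\infty$. A minor technical wrinkle is that Lemma A is phrased for pointwise decreasing sequences whereas $\theta_n\searrow 0$ only a.e.; but since modifications on a null set leave the integral unchanged, this causes no difficulty in the present equivalence-class setting.
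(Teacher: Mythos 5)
Your proof is correct and follows essentially the same route as the paper: fix $m$, apply Lemma A to the sequence $(\varphi_m-\psi_n)^+\searrow 0$, and deduce $\int\varphi_m\ dx\le\lim\int\psi_n\ dx$ before letting $m\to\infty$. You merely spell out in more detail why $(\varphi_m-\psi_n)^+\searrow 0$ a.e.\ and how the comparison $\varphi_m-\psi_n\le(\varphi_m-\psi_n)^+$ yields the integral inequality.
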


\begin{proof}
It suffices to show for each fixed $m$ the inequality 
\begin{equation*}
\int \varphi _m\ dx\le \lim_{n\to\infty} \int \psi _n\ dx.
\end{equation*}
Since the left side is finite,\footnote{We did not need this observation in the classical framework.} 
this is equivalent to the inequality
\begin{equation*}
\lim_{n\to\infty}\int \varphi _m-\psi _n \ dx\le 0.
\end{equation*}
Applying Lemma A to the sequence of functions
\begin{equation*}
(\varphi _m-\psi _n)^+:=\max\set{\varphi _m-\psi _n,0}\searrow 0\qtq{as} n\to\infty
\end{equation*}
we obtain the still stronger relation
\begin{equation*}
\lim_{n\to\infty}\int (\varphi _m-\psi _n)^+ \ dx\le 0.\qedhere
\end{equation*}
\end{proof}

The class $R_1$ is stable for the process used in its definition:

\begin{proposition}\label{p33}
If $(f_n)\subset R_1$ and $f_n\nearrow f$, then $f\in R_1$ and 
\begin{equation*}
\int f_n\ dx\nearrow \int f\ dx.
\end{equation*}
\end{proposition}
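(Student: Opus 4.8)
The plan is to reduce everything to the defining construction of $R_1$ by manufacturing a single monotone sequence of step functions that increases to $f$. For each $n$ I would choose a sequence $(\varphi_{n,k})_{k}\subset R_0$ with $\varphi_{n,k}\nearrow f_n$ as $k\to\infty$, and discard at the outset the union of the (countably many) null sets on which these convergences or the convergence $f_n\nearrow f$ fail, so that all relations below may be read pointwise. The natural candidate is the diagonal-type sequence
\[
\psi_m:=\max\set{\varphi_{1,m},\dots,\varphi_{m,m}},
\]
which lies in $R_0$ because $R_0$ is a lattice (Proposition \ref{p21}(i)).

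First I would check that $(\psi_m)$ is non-decreasing: since each $\varphi_{i,k}$ increases in $k$, we have $\varphi_{i,m}\le\varphi_{i,m+1}$ for every $i\le m$, whence $\psi_m=\max_{i\le m}\varphi_{i,m}\le\max_{i\le m}\varphi_{i,m+1}\le\psi_{m+1}$. Next I would sandwich the limit $g:=\lim_m\psi_m$ against $f$ from both sides. On one hand $\varphi_{i,m}\le f_i\le f_m$ for every $i\le m$, so $\psi_m\le f_m\le f$ and therefore $g\le f$. On the other hand, fixing $i$ and letting $m\to\infty$ in the inequality $\psi_m\ge\varphi_{i,m}$ (valid for $m\ge i$) gives $g\ge\lim_m\varphi_{i,m}=f_i$; as this holds for every $i$, we obtain $g\ge\lim_i f_i=f$. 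Thus $\psi_m\nearrow f$, which proves $f\in R_1$.

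Finally I would read off the integrals. By definition $\int f\ dx=\lim_m\int\psi_m\ dx$. The inequality $\psi_m\le f_m$ together with Proposition \ref{p31}(iii) gives $\int f\ dx=\lim_m\int\psi_m\ dx\le\lim_m\int f_m\ dx$, while $f_m\le f$ and the same monotonicity give $\lim_m\int f_m\ dx\le\int f\ dx$; hence the two limits coincide. Monotonicity of the sequence $\int f_n\ dx$ is immediate from $f_n\le f_{n+1}$ and Proposition \ref{p31}(iii), so $\int f_n\ dx\nearrow\int f\ dx$. I expect the only genuinely delicate point to be the double-index passage to the limit establishing $g\ge f$: it is precisely the monotonicity in \emph{both} indices that makes the diagonal sequence work, and the a.e.\ bookkeeping—collecting the countably many exceptional null sets into one before starting—must be handled carefully so that every pointwise manipulation is legitimate.
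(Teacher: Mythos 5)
Your proof is correct and follows essentially the same route as the paper: the paper's sequence $\varphi_k:=\sup_{n,i\le k}\varphi_{n,i}$ coincides with your diagonal maximum $\max_{i\le k}\varphi_{i,k}$ (since each $\varphi_{i,\cdot}$ is non-decreasing), and the sandwich argument $f_n\le\lim\varphi_k\le f$ together with the integral squeeze is exactly the paper's conclusion. The only cosmetic difference is that you spell out the a.e.\ bookkeeping and the two one-sided limit inequalities, which the paper leaves implicit.
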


\begin{proof}
Fix for each  $n$ a sequence $(\varphi _{n,k})\subset R_0$ satisfying $\varphi _{n,k}\nearrow f_n$.
Then the formula
\begin{equation*}
\varphi _k:=\sup_{n,i\le k}\varphi _{n,i}
\end{equation*}
defines a non-decreasing sequence  of step functions.

For each fixed $n$, we have $\varphi_{n,k}\le\varphi_k\le f$ for all $k\ge n$; letting $k\to\infty$ we conclude that $f_n\le \lim\varphi_k \le f$ for each $n$.
Since $f_n\to f$ a.e., we conclude that $\varphi_k\nearrow f$ a.e.
Therefore $f\in R_1$ and $\int\varphi_k\ dx\to\int f\ dx$.

Since $\varphi_n\le f_n\le f$ for all $n$, integrating and then letting $n\to\infty$ we obtain that 
\begin{equation*}
\lim\int f_n\ dx=\lim\int\varphi_n\ dx=\int f\ dx.\qedhere
\end{equation*}
\end{proof}

\section{The $R_2$ function class and the Lebesgue measure}\label{s4}

It is natural to define $\int -f\ dx:=-\int f\ dx$ if $f\in R_1$. 
More generally, we write $f_1-f_2\in R_2$ if $f_1, f_2\in R_1$ and $\int f_1\ dx-\int f_2\ dx$ is well defined, and for $f=f_1-f_2\in R_2$ we set
\begin{equation*}
\int f\ dx:=\int f_1\ dx-\int f_2\ dx.
\end{equation*}
The function class $R_2$ is well defined: if 
$f_1, f_2\in R_1$, then $\int f_1\ dx-\int f_2\ dx$ is defined if and only if at least one of the two integrals is finite.
Then at least one of the functions $f_1, f_2$ is finite a.e. by Lemma B, so that the function $f_1-f_2$ is defined a.e.

Proposition \ref{p41} (i), (ii) below will show that the definition of the integral is also correct.

\begin{remark}
We may assume in the definition of $R_2$ that $f_1, f_2\ge 0$: choose for $i=1,2$ a step function $\varphi_i$ satisfying $\varphi_i\le f_i$, and change $f_i$ to $f_i-\max\set{\varphi_1,\varphi_2}$.
\end{remark}

Riesz used instead of $R_2$ the smaller class 
\begin{equation*}
C_2:=\set{f\in R_2\ :\ \int f\ dx\text{ is finite}}.
\end{equation*}
The elements of $R_2$ may have infinite integrals, and they are not necessarily finite a.e.
Nevertheless, most properties of the integral on $C_2$ remain valid on $R_2$.

Our first result readily follows from Proposition \ref{p31} for the original class $C_2$; for the extended class $R_2$  some new arguments are needed:

\begin{proposition}\label{p41}\mbox{}
\begin{enumerate}[\upshape (i)]
\item $\int f\ dx$ does not depend on the particular choice of  $f_1$ and $f_2$.
\item The integral on $R_2$ is an extension of the integral on $R_1$.
\item If $f,g\in R_2$ and $f\le g$, then $\int f\ dx\le \int g\ dx$.
\item If $f\in R_2$ and $c\in\RR$, then $cf\in R_2$ and $\int cf\ dx=c\int f\ dx$.
\item If $f,g\in R_2$ and $\int f\ dx+\int g\ dx$ is well defined, then $f+g\in R_2$ and
\begin{equation*}
\int f+g\ dx=\int f\ dx+\int g\ dx.
\end{equation*}
\item If $f,g\in R_2$, then $\max\set{f,g}$ and $\min\set{f,g}$ also belong to $R_2$.
\end{enumerate}
\end{proposition}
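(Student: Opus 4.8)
The plan is to reduce everything to the additivity, monotonicity and lattice properties already established on $R_1$ (Proposition \ref{p31}), using throughout the Remark that the representing functions $f_1,f_2,g_1,g_2\in R_1$ may be taken nonnegative. For (i), if $f_1-f_2=g_1-g_2$ a.e.\ then adding the nonnegative pair gives $f_1+g_2=g_1+f_2$ a.e.; since all four functions lie in $R_1$, the additivity and monotonicity in Proposition \ref{p31}(iii)--(iv) yield $\int f_1\ dx+\int g_2\ dx=\int g_1\ dx+\int f_2\ dx$, whence the claim by rearranging in $\RRR$. The only thing to check is that this rearrangement is legitimate: this is precisely where admissibility (at least one integral in each pair finite, so that the functions are finite a.e.\ by Lemma B, exactly as in the definition of $R_2$) is used to rule out an $\infty-\infty$. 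Statement (ii) is then the case $f_2=0$ of (i), and (iii) is identical to (i) with ``$=$'' replaced by ``$\le$'' throughout.

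For (iv) I would treat the two signs separately: for $c\ge0$ write $cf=(cf_1)-(cf_2)$ and apply $\int cf_i\ dx=c\int f_i\ dx$, while for $c<0$ write $cf=(|c|f_2)-(|c|f_1)$; in both cases scaling by a nonnegative constant preserves admissibility. For (v) the natural candidate is $f+g=(f_1+g_1)-(f_2+g_2)$, both parts lying in $R_1$ by Proposition \ref{p31}(iv). The point to verify is that this difference is admissible, i.e.\ that $\int(f_1+g_1)\ dx$ and $\int(f_2+g_2)\ dx$ are not both infinite; a short case distinction shows that if they were, then $\int f\ dx+\int g\ dx$ would be an $\infty-\infty$, contrary to hypothesis. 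Additivity then follows from Proposition \ref{p31}(iv) and the same rearrangement in $\RRR$.

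The hard part will be (vi). Setting $p:=f_1+g_2$ and $q:=f_2+g_1$ (both in $R_1$), one has the two identities
\begin{equation*}
\min\set{f,g}=\min\set{p,q}-(f_2+g_2)=(f_1+g_1)-\max\set{p,q},
\end{equation*}
in which every term lies in $R_1$ by Proposition \ref{p31}(iv). Neither representation is admissible in all cases, so the plan is to show that at least one of them always is. The first is admissible whenever $\int f_2\ dx+\int g_2\ dx<\infty$, or one of $\int p\ dx,\int q\ dx$ is finite (since $\min\set{p,q}\le p$ and $\le q$); the second is admissible whenever $\int f_1\ dx+\int g_1\ dx<\infty$. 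A case analysis on which of the four integrals $\int f_i\ dx,\int g_i\ dx$ are infinite, combined with the admissibility of $f$ and of $g$, shows that one of these situations must occur, so $\min\set{f,g}\in R_2$. Finally $\max\set{f,g}=-\min\set{-f,-g}$ belongs to $R_2$ by (iv), applied to $-f,-g\in R_2$. I expect the bookkeeping of infinite values in these identities — checking that each equality holds a.e.\ on the set where the functions are defined — to be the only genuinely delicate point.
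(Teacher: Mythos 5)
Your proposal is correct, and parts (i)--(v) follow essentially the paper's route. The one small divergence there is in (i)/(iii): the paper handles the case $\int g_2\ dx=\infty$ by approximating $g_2$ from below by step functions and passing to the limit, whereas you rearrange to $f_1+g_2\le g_1+f_2$ pointwise and then argue directly in $\RRR$; both work, but your version needs exactly the bookkeeping you flag (one must check, e.g., that the integrated inequality together with $\int g_2\ dx=\infty$ forces $\int f_2\ dx=\infty$, so no illegal subtraction occurs). The genuine difference is in (vi). The paper first treats the case $f_1,g_1\in R_0$, where the decomposition $\max\set{f,g}=(f_1+g_1)-\min\set{g_1+f_2,f_1+g_2}$ is automatically admissible because $\int f_1+g_1\ dx$ is finite, and then reaches the general case by monotone approximation $\varphi_n\nearrow f_1$, $\psi_n\nearrow g_1$ combined with the generalized Beppo Levi theorem (Theorem \ref{t42} --- a forward reference, but one whose proof does not depend on this proposition). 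You instead exhibit the two decompositions $\min\set{f,g}=\min\set{p,q}-(f_2+g_2)=(f_1+g_1)-\max\set{p,q}$ and show by a case analysis that one of them is always admissible; your analysis is complete: if $\int f_2\ dx=\int g_2\ dx=\infty$ then $\int f_1\ dx,\int g_1\ dx<\infty$ and the second form works, and otherwise either $\int f_2+g_2\ dx<\infty$ or one of $\int p\ dx,\int q\ dx$ is finite, so $\int\min\set{p,q}\ dx<\infty$ and the first form works. Your argument buys a purely algebraic proof with no limit process and no appeal to Theorem \ref{t42}; the paper's version is shorter to write and puts the generalized Beppo Levi theorem to work, at the cost of the forward reference and of the a.e.\ verification of the monotone convergence. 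Reducing $\max$ to $\min$ via $\max\set{f,g}=-\min\set{-f,-g}$ and part (iv) is fine.
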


\begin{proof}
For (i) and (iii) we have to show that if $f=f_1-f_2$ and $g=g_1-g_2$ as in the definition of $f,g\in R_2$, and $f\le g$, then
\begin{equation}\label{41}
\int f_1\ dx-\int f_2\ dx\le \int g_1\ dx-\int g_2\ dx.
\end{equation}
The inequality is obvious if $\int f_2\ dx=\infty$.
Henceforth we assume that $\int f_2\ dx$ is finite.

If $\int g_2\ dx$ is also finite, then our assumption $f_1-f_2\le g_1-g_2$ implies that $f_1+g_2\le g_1+f_2$ because $f_2$ and $g_2$ are finite a.e. by Lemma B, and then $\int f_1\ dx+\int g_2\ dx\le \int g_1\ dx+\int f_2\ dx$ by Proposition \ref{p31} (iv).
Since $\int g_1\ dx$ and $\int g_2\ dx$ are finite, hence \eqref{41} follows.

If $\int g_2\ dx=\infty$, then choose a sequence $(\varphi_n)\subset R_0$ satisfying $\varphi_n\nearrow g_2$. 
We have $f_1-f_2\le g_1-\varphi_n$ for each $n$.
Applying the preceding arguments with $\varphi_n$ in place of $g_2$ we get 
\begin{equation*}
\int f_1\ dx-\int f_2\ dx\le \int g_1\ dx-\int \varphi_n\ dx,
\end{equation*} 
and \eqref{41} follows by letting $n\to\infty$.
\medskip 

(ii) follows from (i) by choosing $f_1:=f$ and $f_2:=0$ if $f\in R_1$.
\medskip 

(iv) is obvious.
\medskip 

(v) Write $f=f_1-f_2$ and $g=g_1-g_2$ with $f_1, f_2, g_1, g_2\in R_1$ as in the definition of $f, g\in R_2$.
If both integrals $\int f_2\ dx$ and $\int g_2\ dx$ are finite, then Proposition \ref{p31} (iv) shows that $f_1+g_1, f_2+g_2\in R_1$, and $\int f_2+g_2\ dx=\int f_2\ dx+\int g_2\ dx$  is finite.
Therefore
\begin{equation*}
f+g=(f_1+g_1)-(f_2+g_2)
\end{equation*}  
belongs to $R_2$, and using Proposition \ref{p31} (iv) again we obtain that 
\begin{align*}
\int f+g\ dx
&=\int f_1+g_1\ dx-\int f_2+g_2\ dx\\
&=\left(\int f_1\ dx+\int g_1\ dx\right)-\left(\int f_2\ dx+\int g_2\ dx\right)\\
&=\left(\int f_1\ dx-\int f_2\ dx\right)+\left(\int g_1\ dx-\int g_2\ dx\right)\\
&=\int f\ dx+\int g\ dx.
\end{align*}

If one of the integrals $\int f_2\ dx$ and $\int g_2\ dx$ is infinite, then both integrals $\int f_1\ dx$ and $\int g_1\ dx$ are finite by the definition of $f\in R_2$ and by our assumption that $\int f\ dx+\int g\ dx$ is well defined, and we may repeat the above proof.
\medskip 

(vi) By symmetry we consider only the case of maximum. 
We have to show that if $f=f_1-f_2$ and $g=g_1-g_2$ as in the definition of $f,g\in R_2$, then $\max\set{f_1-f_2,g_1-g_2}\in R_2$.
If $f_1, g_1\in R_0$, then
\begin{align*}
\max\set{f_1-f_2,g_1-g_2}
&=(f_1+g_1)+\max\set{-g_1-f_2,-f_1-g_2}\\
&=(f_1+g_1)-\min\set{g_1+f_2,f_1+g_2}
\end{align*}
is the difference of two elements of $R_1$. 
Furthermore,  $\int f_1+g_1\ dx$ is finite because $f_1+g_1\in R_0$, so that the difference belongs to $R_2$.

In the general case we choose two sequences $(\varphi_n), (\psi_n)\subset R_0$ satisfying $\varphi_n\nearrow f_1$ and $\psi_n\nearrow g_1$.
We have $\max\set{\varphi_n-f_2,\psi_n-g_2}\in R_2$ by the preceding arguments, and
\begin{equation*}
\max\set{\varphi_n-f_2,\psi_n-g_2}\nearrow \max\set{f_1-f_2,g_1-g_2}.
\end{equation*}
Since
\begin{equation*}
\int \max\set{\varphi_n-f_2,\psi_n-g_2}\ dx>-\infty
\end{equation*}
for all $n$, because at least one of the integrals $\int f_2\ dx$ and $\int g_2\ dx$ are finite, we may conclude by applying Theorem \ref{t42} below.
\end{proof}

The class $R_2$ has a similar invariance property as $R_1$:

\begin{theorem}[Generalized Beppo Levi theorem]\label{t42}
Let $(f_n)\subset R_2$ and $f_n\nearrow f$. 
If $\int f_n\ dx>-\infty$ for at least one $n$, then $f\in R_2$ and 
\begin{equation}\label{43}
\int f_n\ dx\nearrow \int f\ dx.
\end{equation}
\end{theorem}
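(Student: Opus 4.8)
The plan is to reduce the statement to the Beppo Levi theorem already established on $R_1$ (Proposition \ref{p33}) by adding a single, well-chosen finite $R_1$ function to the entire sequence. First I would use the monotonicity of the integral on $R_2$ (Proposition \ref{p41}(iii)): since $f_n\nearrow f$, the numbers $\int f_n\,dx$ increase, so once $\int f_{n_0}\,dx>-\infty$ we have $\int f_n\,dx>-\infty$ for every $n\ge n_0$. Discarding the first $n_0-1$ terms changes neither $f$ nor $L:=\lim\int f_n\,dx\in(-\infty,+\infty]$, so I may assume $\int f_n\,dx>-\infty$ for all $n$.

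Next, fix a representation $f_1=a-b$ with $a,b\in R_1$ and, by the Remark following the definition of $R_2$, with $a,b\ge 0$. From $\int f_1\,dx=\int a\,dx-\int b\,dx>-\infty$ and the fact that this difference is well defined, $\int b\,dx$ must be finite; Lemma \ref{lB} then shows that $b$ is finite a.e. The key claim is that $f_n+b\in R_1$ for every $n$. Granting this, $f_n+b\nearrow f+b$ (because $b$ is finite a.e.), so Proposition \ref{p33} yields $f+b\in R_1$ together with $\int(f_n+b)\,dx\nearrow\int(f+b)\,dx$; by the linearity of the integral on $R_2$ (Proposition \ref{p41}(v)) and the finiteness of $\int b\,dx$, this reads $\int f_n\,dx\nearrow\int(f+b)\,dx-\int b\,dx$. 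Finally $f=(f+b)-b$ exhibits $f$ as a difference of $R_1$ functions with a finite subtrahend, so $f\in R_2$ and $\int f\,dx=\int(f+b)\,dx-\int b\,dx=L$; this single computation settles \eqref{43} in the cases $L$ finite and $L=+\infty$ simultaneously.

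The main obstacle is the key claim $f_n+b\in R_1$. Observe that $f_n+b\ge f_1+b=a\ge 0$, so $f_n+b$ is a nonnegative element of $R_2$, and the claim is exactly the assertion that a nonnegative function in $R_2$ lies in $R_1$. This is the one genuinely new ingredient, and it is where Lemma \ref{lB} plays, for the second step, the role Lemma A played for the first. I would establish it by reducing (again via the Remark) to the form $\phi=p-q$ with $0\le q\le p$, $p,q\in R_1$ and $\int q\,dx$ finite, approximating $p$ and $q$ from below by step functions $\chi_j\nearrow p$ and $\psi_j\nearrow q$, and building from these a \emph{monotone} sequence of step functions tending to $\phi$; the finiteness of $\int q\,dx$ together with Lemma \ref{lB} is what guarantees that this construction converges a.e. to a finite-valued function rather than degenerating. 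I expect this lemma, and specifically the passage from the two separate lower approximations of $p$ and $q$ to a single increasing lower approximation of their difference (where the naive choice $\chi_j-\psi_j$ fails to be monotone), to be the crux; once it is in hand, everything else is routine bookkeeping with the conventions on $\pm\infty$.
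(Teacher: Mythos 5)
Your overall strategy --- adding one fixed function $b\in R_1$ with finite integral to the whole sequence so as to land in $R_1$ and then invoking Proposition \ref{p33} --- founders on your key claim, which is false: a nonnegative element of $R_2$ need \emph{not} belong to $R_1$. The paper's appendix records Weir's counterexample. With $S:=(0,1)\cap\left(\cup_{n=1}^{\infty}(r_n-2^{-n-3},r_n+2^{-n-3})\right)$ an open dense subset of $(0,1)$ of measure $<1$, the function $\chi_{(0,1)}-\chi_S=\chi_{(0,1)\setminus S}$ is exactly of the form $p-q$ with $0\le q\le p$, $p,q\in R_1$ and $\int q\ dx$ finite, yet it is not in $R_1$: every step function $\varphi\le\chi_{(0,1)\setminus S}$ satisfies $\varphi\le 0$ a.e.\ (a nondegenerate interval on which $\varphi$ is a positive constant would have to be contained in the nowhere dense set $(0,1)\setminus S$ up to a null set, contradicting Borel's theorem), so no sequence of step functions increasing to $p-q$ can have integrals converging to $\int p-q\ dx>0$. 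This also refutes the claim $f_n+b\in R_1$ directly: take $f_1:=0$, so $a=b=0$, and $f_2:=\chi_{(0,1)\setminus S}\ge f_1$; all hypotheses of the theorem hold, but $f_2+b=f_2\notin R_1$. Hence the ``crux lemma'' you plan to prove cannot be proved, and the monotone lower approximation of $p-q$ by step functions that you hope to construct does not exist in general.

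The repair is to abandon the single corrector $b$ in favour of an $n$-dependent one, which is what the paper does. Write $f_n=g_n-h_n$ with $g_n,h_n\in R_1$ and $\int h_n\ dx$ finite; subtract a step function $\varphi_n\le h_n$ from both terms so that the new $h_n$ satisfies $h_n\ge 0$ and $\int h_n\ dx<2^{-n}$; then replace $g_n,h_n$ by $h_1+\cdots+h_{n-1}+g_n$ and $h_1+\cdots+h_{n-1}+h_n$, making $(h_n)$ non-decreasing with $\int h_n\ dx\le 1$. Now $g_n=f_n+h_n$ lies in $R_1$ \emph{by construction} --- not because it is a nonnegative element of $R_2$ --- and both $(g_n)$ and $(h_n)$ are non-decreasing sequences in $R_1$, so Proposition \ref{p33} applies to each; since $\int h\ dx\le 1<\infty$ for the limit $h$, one gets $f=g-h\in R_2$ and \eqref{43}. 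Your initial reduction to the case $\int f_n\ dx>-\infty$ for all $n$, and your final bookkeeping identifying the limit, are fine and coincide with the paper's.
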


\begin{examples}
The functions
\begin{equation*}
f_n:=-\chi_{(-\infty,0)}+\chi_{(0,n)}
\qtq{and} 
f_n:=-\chi_{(n,\infty)}
\end{equation*}
show that the assumption $\lim\int f_n\ dx>-\infty$ cannot be omitted.
Indeed, in the first case  $f=\sign\notin R_2$; in the second case $f=0\in R_2$, but \eqref{43} fails.
\end{examples}

The proof below is a simple adaptation (even a simplification) of the usual one for the smaller class $C_2$.
Since this theorem is fundamental in the present construction, we give the proof here for the convenience of the reader.

\begin{proof}[Proof of Theorem \ref{t42}]
By omitting a finite number of initial terms we may assume that $\int f_n\ dx>-\infty$ for all $n$.
Write $f_n=g_n-h_n$ with $g_n, h_n\in R_1$.
Since $\int h_n\ dx$ is finite, there exists $\varphi_n\in R_0$ satisfying $\varphi_n\le h_n$ and $\int h_n-\varphi_n\ dx<2^{-n}$.
Changing $g_n$ and $h_n$ to $g_n-\varphi_n$ and $h_n-\varphi_n$ we may assume that 
\begin{equation*}
f_n=g_n-h_n,\quad
g_n, h_n\in R_1,\quad 
h_n\ge 0\qtq{and} 
\int h_n\ dx\le 2^{-n}
\end{equation*}
for $n=1,2,\ldots .$
Finally, changing $g_n$ and $h_n$ by induction on $n=2,3,\ldots$ to
\begin{equation*}
h_1+\cdots +h_{n-1}+g_n
\qtq{and} 
h_1+\cdots +h_{n-1}+h_n
\end{equation*}
we may assume that 
\begin{equation*}
f_n=g_n-h_n,\quad
g_n, h_n\in R_1,\quad 
h_n\nearrow \qtq{and} 
\int h_n\ dx\le 1
\end{equation*}
for $n=1,2,\ldots .$

Applying Proposition \ref{p33} to the non-decreasing sequences $(h_n)$ and $(g_n)=(f_n+h_n)$ we obtain that $h_n\nearrow h$ and $g_n\nearrow g$ with suitable functions $h,g\in R_1$, and $\int h\ dx\le 1<\infty$.
Hence $f=g-h\in R_2$ and
\begin{equation*}
\int f_n\ dx=\int g_n\ dx-\int h_n\ dx\to \int g\ dx-\int h\ dx=\int f\ dx.\qedhere
\end{equation*}
\end{proof}

Now we may greatly generalize the length of intervals. 
We write $A\in\mm$  if $\chi_A\in R_2$, and we set $\mu(A):=\int\chi_A\ dx$ in this case. 
Theorem \ref{t42} yields at once the 

\begin{theorem}\label{t43}
$\mu$ is a  $\sigma$-finite, complete measure on $\mm$.
\end{theorem}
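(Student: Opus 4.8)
The plan is to verify in turn that $\mm$ is a $\sigma$-algebra, that $\mu$ satisfies the measure axioms, and finally that it is $\sigma$-finite and complete. Throughout, the only tools I would need are the lattice and linearity properties collected in Proposition \ref{p41} and the generalized Beppo Levi theorem (Theorem \ref{t42}); the recurring difficulty, which distinguishes this setting from the classical one, is that sets may have \emph{infinite} measure, so one must constantly avoid forming the undefined difference $\infty-\infty$.

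First I would dispose of the easy points. Since $\chi_A\ge 0=\chi_\emptyset$ and both lie in $R_2$, monotonicity (Proposition \ref{p41}(iii)) gives $\mu(A)\ge 0$, and $\chi_\emptyset=0\in R_0$ gives $\mu(\emptyset)=0$. Finite unions and intersections are handled by Proposition \ref{p41}(vi), writing $\chi_{A\cup B}=\max\set{\chi_A,\chi_B}$ and $\chi_{A\cap B}=\min\set{\chi_A,\chi_B}$. Countable unions then follow by letting $N\to\infty$ in $\chi_{\bigcup_{n\le N}A_n}=\max\set{\chi_{A_1},\dots,\chi_{A_N}}\nearrow\chi_{\bigcup_n A_n}$: the integrals are $\ge 0>-\infty$, so Theorem \ref{t42} applies and $\bigcup_n A_n\in\mm$. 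The same monotone-limit idea gives $\RR\in\mm$ via $\chi_{(-n,n)}\nearrow\chi_\RR$.

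The one delicate point in the $\sigma$-algebra part is closure under complements, where the naive identity $\chi_{A^c}=\chi_\RR-\chi_A$ runs into $\infty-\infty$ as soon as $\mu(A)=\infty$. I would sidestep this by \emph{localizing}: the set $A^c\cap(-n,n)=(-n,n)\setminus A$ has characteristic function $\chi_{(-n,n)}-\min\set{\chi_A,\chi_{(-n,n)}}$, a difference of two members of $R_2$ whose integrals are both \emph{finite} (being at most $\mu((-n,n))=2n$), so it lies in $R_2$ by Proposition \ref{p41}(iv)--(v); then $A^c=\bigcup_n\big(A^c\cap(-n,n)\big)\in\mm$ by the countable-union step. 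Countable additivity is then the analytic heart of the matter: for pairwise disjoint $A_n$ disjointness turns the maximum into a sum, so $\chi_{\bigcup_{n\le N}A_n}=\sum_{n\le N}\chi_{A_n}\nearrow\chi_{\bigcup_n A_n}$ (legitimate by Proposition \ref{p41}(v), all integrals being nonnegative), and Theorem \ref{t42} yields $\mu\big(\bigcup_n A_n\big)=\sum_n\mu(A_n)$.

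Finally, $\sigma$-finiteness is immediate from $\RR=\bigcup_n(-n,n)$ with $\mu((-n,n))=2n<\infty$. For completeness I would show that the sets of measure zero are exactly the null sets of Section \ref{s2}. One inclusion is trivial, since a null set has $\chi_N=0$ a.e. and hence $\mu(N)=0$. The converse, \emph{$\mu(A)=0$ implies $A$ null}, is the second genuine obstacle: one must pass from the vanishing of the integral to a covering of $A$ by intervals of arbitrarily small total length. I would obtain it by a Chebyshev-type estimate at the level of step functions, approximating $\chi_A$ from above by $R_1$ functions whose integrals tend to $0$ and covering the super-level set $\set{\,\cdot\ge 1}\supseteq A$ by finitely many intervals of controlled total length. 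Once measure zero and null coincide, completeness is automatic: any $B\subseteq A$ with $\mu(A)=0$ is a subset of a null set, hence itself null, so $\chi_B=0$ a.e. and $B\in\mm$ with $\mu(B)=0$.
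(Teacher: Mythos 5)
Your argument is essentially correct, but it proves considerably more than the theorem asks and takes a different route from the paper on the two substantive points. The paper's proof is a one-liner: with its deliberately weak definition of ``measure'' (a nonnegative $\sigma$-additive set function on a family $\aa$ with $\varnothing\in\aa$), the only real content is $\sigma$-additivity, which is exactly your step $\sum_{n\le N}\chi_{A_n}=\chi_{\cup_{n\le N}A_n}\nearrow\chi_{\cup_nA_n}$ followed by Theorem \ref{t42}; $\sigma$-finiteness comes from the cover by $(-n,n)$, and completeness reduces to ``$\mu(A)=0\Rightarrow A$ is null'', which the paper extracts from Theorem \ref{t42} itself (apply it to $n\chi_A\nearrow\infty\cdot\chi_A$ and invoke Lemma B to get $\chi_A=0$ a.e.; this is Corollary \ref{c53}(ii)). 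Your entire $\sigma$-algebra discussion --- in particular the localization $A^c\cap(-n,n)$ trick for complements --- is correct but not required: the theorem does not assert $\mm$ is a $\sigma$-algebra, and the paper deliberately defers the structure of $\mm$ to Proposition \ref{p62}(iii), where only a $\sigma$-\emph{ring} is claimed (Section \ref{s9} explains why: in the general setting of Section \ref{s7} the full space need not be measurable, so your complement argument, which exploits $\RR=\cup(-n,n)$, would not survive the generalization). What your route buys is an elementary, Section-4-only proof that $\mm$ is a $\sigma$-algebra for Lebesgue measure on $\RR$, without the measurability machinery of Section \ref{s6}; what it costs is the extra work and a loss of generality. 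The one soft spot is your completeness step: the ``Chebyshev-type estimate'' is only sketched, and carrying it out amounts to re-proving the covering argument inside Lemma B (pass from $\chi_A=f_1-f_2$ to a majorant $u=f_1-\psi\in R_1$ with $\int u\ dx<\eps$, then cover $\set{u\ge 1}\supseteq A$ level by level using the approximating step functions). It works, but the $n\chi_A$ argument via Theorem \ref{t42} and Lemma B gets the same conclusion with no new covering construction.
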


The following notions are used here.
Given a family $\aa$ of subsets of a set $X$ with $\varnothing\in\aa$, by a \emph{measure} on $\aa$ we mean a nonnegative function $\mu:\aa\to\RRR$ such that $\mu(\varnothing)=0$, and satisfying the $\sigma$-\emph{additivity} relations
\begin{equation*}
\mu(A)=\sum_{n=1}^{\infty}\mu(A_n)
\end{equation*}
whenever $A\in\aa$ is the disjoint union of a sequence $(A_n)\subset\aa$.

This  measure is called $\sigma$-\emph{finite} if each $A\in\aa$ has a countable cover by sets of finite measure, and \emph{complete} if all subsets of a set of measure zero also belong to $\aa$.

We call the elements of $\mm$ \emph{measurable sets}.

\begin{remark}
We will describe the structure of $\mm$ in Proposition \ref{p62} (iv) below.
\end{remark}

\section{The space $L^1$}\label{s5}

We write $f\in L^1$ if $f\in R_2$ and $\int f\ dx$ is finite, i.e.,
\begin{equation*}
L^1:=\set{f\in R_2\ :\ \int f\ dx\text{ is finite}}.
\end{equation*}
Each $f\in L^1$ is finite a.e. by Lemma B.

Observe that $L^1$ coincides with the class $C_2$ of Riesz.
We are going to show that $L^1$ has a simple structure, and the integral on $L^1$ has some remarkable properties.

\begin{proposition}\label{p51}\mbox{} 
\begin{enumerate}[\upshape (i)]
\item $L^1$ is a vector lattice.
\item The integral is a positive linear functional on $L^1$.
\end{enumerate}
\end{proposition}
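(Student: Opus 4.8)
The plan is to read off both assertions from Proposition \ref{p41}, the only genuine work being to check that finiteness of the integral is preserved under the four operations. For the vector space structure, if $f,g\in L^1$ and $c\in\RR$, then Proposition \ref{p41}(iv) gives $cf\in R_2$ with $\int cf\ dx=c\int f\ dx$, which is finite, so $cf\in L^1$; and since $\int f\ dx$ and $\int g\ dx$ are both finite their sum is well defined and finite, so Proposition \ref{p41}(v) gives $f+g\in R_2$ with $\int f+g\ dx=\int f\ dx+\int g\ dx$ finite, whence $f+g\in L^1$. Thus $L^1$ is a vector subspace of $R_2$, and the two displayed formulas already establish the linearity asserted in part (ii); positivity is the special case $f\ge 0$ of the monotonicity in Proposition \ref{p41}(iii), applied with the constant function $0\in L^1$.

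It remains to prove that $L^1$ is closed under $\max$ and $\min$. By Proposition \ref{p41}(vi) we already know $\max\set{f,g},\min\set{f,g}\in R_2$, so the whole point is to verify that their integrals are finite. First I would observe that for $f\in L^1$ every decomposition $f=f_1-f_2$ with $f_1,f_2\in R_1$ must have both $\int f_1\ dx$ and $\int f_2\ dx$ finite: if one of them were infinite, the well-defined difference $\int f\ dx$ would be infinite, contradicting $f\in L^1$. Moreover, by the Remark following the definition of $R_2$ I may take $f_1,f_2\ge 0$, and likewise $g=g_1-g_2$ with $g_1,g_2\ge 0$ in $R_1$ and of finite integral. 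Then I would reuse the algebraic identity from the proof of Proposition \ref{p41}(vi),
\begin{equation*}
\max\set{f,g}=(f_1+g_1)-\min\set{g_1+f_2,\ f_1+g_2},
\end{equation*}
in which $f_1+g_1$, $g_1+f_2$ and $f_1+g_2$ are nonnegative elements of $R_1$ with finite integral by Proposition \ref{p31}(iv). The min-term is then a nonnegative element of $R_1$ bounded above by $g_1+f_2$, so by monotonicity (Proposition \ref{p31}(iii)) its integral lies in $[0,\infty)$; hence the right-hand side has finite integral, and $\max\set{f,g}\in L^1$. Finally $\min\set{f,g}=(f+g)-\max\set{f,g}$ belongs to $L^1$ by the vector space part already proved. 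This completes (i), and (ii) was noted above.

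The step I expect to be the main obstacle is precisely the finiteness of $\int\max\set{f,g}\ dx$ and $\int\min\set{f,g}\ dx$, rather than their membership in $R_2$. Monotonicity alone does not suffice: from $\min\set{f,g}\le f\le\max\set{f,g}$ one only gets the one-sided bounds $\int\min\set{f,g}\ dx\le\int f\ dx$ and $\int\max\set{f,g}\ dx\ge\int f\ dx$, and the pointwise identity $\max\set{f,g}+\min\set{f,g}=f+g$ would close the gap only if one already knew the sum $\int\max\set{f,g}\ dx+\int\min\set{f,g}\ dx$ to be well defined, that is, not of the indeterminate form $\infty-\infty$. Passing through the nonnegative finite-integral representation above is exactly what rules this out and pins down both integrals at once.
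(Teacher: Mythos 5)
Your proof is correct and follows essentially the same route as the paper: the paper's proof of Proposition \ref{p51}(i) likewise exhibits $cf$, $f+g$, $\max\set{f,g}$ and $\min\set{f,g}$ as differences of two $R_1$ functions with finite integrals, using the (equivalent) identity $\max\set{f,g}=\max\set{f_1+g_2,g_1+f_2}-(f_2+g_2)$ where you use $\max\set{f,g}=(f_1+g_1)-\min\set{g_1+f_2,f_1+g_2}$. Your closing remark correctly pinpoints the one non-trivial issue --- finiteness of the integrals of the max and min, which monotonicity alone does not give --- and this is exactly what the paper's direct representations settle via Proposition \ref{p31}(iv).
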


\begin{proof}
(i) We show that if $f,g\in L^1$ and $c\in\RR$ then 
\begin{equation*}
cf,\quad f+g,\quad \min\set{f,g}\qtq{and}\max\set{f,g}
\end{equation*}
also belong to $L^1$. 
Write $f=f_1-f_2$ and $g=g_1-g_2$ with  $f_1, f_2, g_1, g_2\in R_1$ having finite integrals.
Then the claim follows from the  representations
\begin{align*}
&cf=cf_1-cf_2\qtq{if}c\ge 0,\\
&cf=(-c)f_2-(-c)f_1\qtq{if}c<0,\\
&f+g=(f_1+g_1)-(f_2+g_2),\\
&\min\set{f,g}=\min\set{f_1+g_2,g_1+f_2}-(f_2+g_2),\\
&\max\set{f,g}=\max\set{f_1+g_2,g_1+f_2}-(f_2+g_2),
\end{align*}
because  by Proposition \ref{p31} (iv) the right side of each equality is the difference of two functions from $R_1$ having finite integrals.
\medskip 

(ii) The integral is linear by Proposition \ref{p31} (iv) and by the definition of the integral on $R_2$, and it is monotone by Proposition \ref{p41} (iii).
\end{proof}

Theorem \ref{t42} yields at once the fundamental

\begin{theorem}[Beppo Levi]\label{t52}
If $(f_n)\subset L^1$, $f_n\nearrow f$ and $\sup\int f_n\ dx<\infty$, then $f\in L^1$ and 
\begin{equation*}
\int f_n\ dx\nearrow \int f\ dx.
\end{equation*}
\end{theorem}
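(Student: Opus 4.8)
The plan is to specialize Theorem~\ref{t42}. Since $L^1\subset R_2$ and $f_n\nearrow f$ by hypothesis, the only remaining assumption of the Generalized Beppo Levi theorem is that $\int f_n\ dx>-\infty$ for at least one $n$. This is automatic here: every $f_n\in L^1$ has a \emph{finite} integral, so in fact $\int f_n\ dx>-\infty$ for all $n$. Applying Theorem~\ref{t42} therefore yields $f\in R_2$ together with the convergence $\int f_n\ dx\nearrow\int f\ dx$, which is already the displayed relation in the statement.

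It then remains only to promote $f\in R_2$ to $f\in L^1$, i.e. to check that $\int f\ dx$ is finite. First I would note that the sequence $\left(\int f_n\ dx\right)$ is non-decreasing, by monotonicity of the integral on $R_2$ (Proposition~\ref{p41}~(iii) applied to $f_n\le f_{n+1}$), and bounded above by the hypothesis $\sup\int f_n\ dx<\infty$; hence its limit equals its supremum and is finite. Since the convergence $\int f_n\ dx\to\int f\ dx$ from the previous step identifies this limit with $\int f\ dx$, I conclude that $\int f\ dx=\sup\int f_n\ dx<\infty$, so $f\in L^1$ by the definition of $L^1$.

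I expect no genuine obstacle here: the entire analytic content has already been packaged into Theorem~\ref{t42}, and the present statement is merely its restriction to the subclass $L^1$, where the integrals are a priori finite. The only point requiring a one-line argument beyond a direct citation is the finiteness of $\int f\ dx$, and this follows immediately from the boundedness hypothesis $\sup\int f_n\ dx<\infty$ once the convergence of the integrals is in hand.
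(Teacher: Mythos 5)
Your proof is correct and follows exactly the route the paper intends: the paper simply remarks that Theorem~\ref{t42} ``yields at once'' the Beppo Levi theorem, and your argument fills in precisely the two routine details (the hypothesis $\int f_n\ dx>-\infty$ holds automatically for $f_n\in L^1$, and the finiteness of $\int f\ dx$ follows from $\sup\int f_n\ dx<\infty$ via the established convergence). Nothing further is needed.
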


\begin{corollary}\label{c53}\mbox{}
\begin{enumerate}[\upshape (i)]
\item The formula $\norm{f}_1:=\int |f|\ dx$ defines a norm on $L^1$.
\item A set $A$ is a null set $\Longleftrightarrow A\in\mm$ and $\mu(A)=0$.
\end{enumerate}
\end{corollary}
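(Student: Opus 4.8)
The plan is to treat the two assertions separately, establishing (i) first, since the definiteness of the norm will then immediately yield the non-trivial direction of (ii). For (i), I would begin by checking that $\norm{\cdot}_1$ is well defined on $L^1$: since $L^1$ is a vector lattice by Proposition \ref{p51} (i), for $f\in L^1$ we have $|f|=\max\set{f,-f}\in L^1$, so $\int|f|\ dx$ is a finite number, and it is nonnegative by the monotonicity of Proposition \ref{p41} (iii) applied to $|f|\ge 0$. Homogeneity $\norm{cf}_1=\abs{c}\norm{f}_1$ then follows from the linearity of the integral (Proposition \ref{p51} (ii)) together with the pointwise identity $|cf|=\abs{c}\,|f|$, and the triangle inequality $\norm{f+g}_1\le\norm{f}_1+\norm{g}_1$ follows from the pointwise inequality $|f+g|\le|f|+|g|$ combined with monotonicity and additivity. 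All of these are formal.

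The one delicate axiom is definiteness, and this is where I expect the main obstacle to lie: I must show that $\int|f|\ dx=0$ forces $f=0$ a.e. Here I would exploit the possibility of infinite integrals by applying the Beppo Levi theorem (Theorem \ref{t52}) to the non-decreasing sequence $g_n:=n|f|\in L^1$. Each $g_n$ satisfies $\int g_n\ dx=n\int|f|\ dx=0$, so $\sup_n\int g_n\ dx=0<\infty$, and the theorem gives $g:=\lim_n g_n\in L^1$. Since every element of $L^1$ is finite a.e.\ by Lemma B, the limit $g$ is finite a.e. But $g(x)=+\infty$ at every point where $|f(x)|>0$; hence $\set{|f|>0}$ is a null set, that is, $f=0$ a.e. The crux is precisely the recognition that the blow-up of $n|f|$ on a non-null set would contradict the a.e.-finiteness guaranteed by Lemma B.

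For (ii), recall that $A\in\mm$ means $\chi_A\in R_2$ and $\mu(A):=\int\chi_A\ dx$. If $A$ is a null set, then $\chi_A=0$ a.e.; since we work with equivalence classes modulo null sets and $0\in R_0\subset R_2$, we obtain $\chi_A\in R_2$, i.e.\ $A\in\mm$, with $\mu(A)=\int 0\ dx=0$. Conversely, if $A\in\mm$ and $\mu(A)=0$, then $\chi_A\in R_2$ has finite integral $0$, so $\chi_A\in L^1$ and $\norm{\chi_A}_1=\int\chi_A\ dx=0$; the definiteness established in (i) then yields $\chi_A=0$ a.e., which is exactly the assertion that $A$ is a null set. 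Thus (ii) is a direct consequence of (i) and the a.e.-identification convention, and involves no further difficulty.
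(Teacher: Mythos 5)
Your proposal is correct and follows essentially the same route as the paper: the paper's proof of (i) is exactly the application of Theorem \ref{t52} to $f_n:=n\abs{f}$ (with the conclusion via a.e.-finiteness of $L^1$ functions, which the paper records just before Proposition \ref{p51}), and (ii) is likewise deduced from the definition of the integral in one direction and from (i) in the other. You merely spell out the routine norm axioms that the paper dismisses as non-problematic.
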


\begin{proof}
(i) The only non-trivial property is that if $\int\abs{f}\ dx=0$, then $f=0$.
This follows by applying Theorem \ref{t52} with $f_n:=n\abs{f}$.
\medskip 

(ii) We have to show that $\chi_A=0\Longleftrightarrow\int\chi_A\ dx=0$.
The implication $\Longrightarrow$ follows from the definition of the integral.
The converse implication follows from (i).
\end{proof}

We recall that Theorem \ref{t52} also implies the following two important theorems:

\begin{theorem}[Fatou]\label{t54}
Let $(f_n)\subset L^1$ and $f_n\to f$.
If $f_n\ge 0$ for all $n$, and $\liminf \int f_n\ dx<\infty$, then $f\in L^1$ and
\begin{equation*}
\int f\ dx\le \liminf \int f_n\ dx.
\end{equation*}
\end{theorem}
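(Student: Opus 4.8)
The plan is to deduce Fatou's lemma from the Beppo Levi theorem (Theorem \ref{t52}) by passing to the non-decreasing sequence of lower envelopes. First I would set, for each $n$,
\begin{equation*}
g_n:=\inf_{k\ge n}f_k.
\end{equation*}
Since $f_n\to f$ a.e., these envelopes satisfy $g_n\nearrow\liminf_k f_k=f$, and $g_n\ge 0$ because every $f_k\ge 0$. If I can show that each $g_n$ belongs to $L^1$ and that $\sup_n\int g_n\ dx<\infty$, then Theorem \ref{t52} will immediately give $f\in L^1$ together with $\int g_n\ dx\nearrow\int f\ dx$.

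The only delicate point is that $g_n$ is an infinite infimum, so it is not covered directly by the lattice property of $L^1$ (Proposition \ref{p51}); here I would build it as a monotone limit. Fixing $n$ and writing $g_{n,m}:=\min\set{f_n,\ldots,f_m}$ for $m\ge n$, each $g_{n,m}\in L^1$ by Proposition \ref{p51} (i), and $g_{n,m}\searrow g_n$ as $m\to\infty$. Equivalently $-g_{n,m}\nearrow -g_n$, and since $g_{n,m}\ge 0$ we have $\int -g_{n,m}\ dx\le 0$, so the hypothesis $\sup_m\int -g_{n,m}\ dx<\infty$ of Theorem \ref{t52} is satisfied. Applying the theorem to the sequence $(-g_{n,m})_m$ yields $-g_n\in L^1$, hence $g_n\in L^1$.

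It then remains to estimate the integrals. From $g_n\le f_n$ and the monotonicity of the integral (Proposition \ref{p51} (ii)) we get $\int g_n\ dx\le\int f_n\ dx$ for every $n$. Since the sequence $\bigl(\int g_n\ dx\bigr)$ is non-decreasing, its supremum equals its limit, so
\begin{equation*}
\sup_n\int g_n\ dx=\lim_n\int g_n\ dx\le\liminf_n\int f_n\ dx<\infty.
\end{equation*}
This verifies the last hypothesis of Theorem \ref{t52}, which now applies to $(g_n)$ and gives $f\in L^1$ with $\int f\ dx=\lim_n\int g_n\ dx\le\liminf_n\int f_n\ dx$, as required. The main obstacle is precisely the construction of $g_n$ inside $L^1$: the lattice property handles only finite minima, so the essential trick is to realize the lower envelope as a \emph{decreasing} limit and invoke Beppo Levi ``from above'' through the sign change, using the nonnegativity of the $f_k$ to keep the relevant integrals bounded.
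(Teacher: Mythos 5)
Your proof is correct and follows essentially the same route as the paper's own argument: both pass to the lower envelopes $\inf_{k\ge n}f_k$, establish their membership in $L^1$ by applying the Beppo Levi theorem to the negated decreasing finite minima (using nonnegativity to bound the integrals), and then invoke Beppo Levi once more on the non-decreasing envelope sequence. No gaps.
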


\begin{theorem}[Lebesgue]\label{t55}
Let $(f_n)\subset L^1$ and $f_n\to f$. 
If there exists $g\in L^1$ such that $\abs{f_n}\le g$ for all $n$, then $f\in L^1$, and $\int f_n\ dx\to\int f\ dx$.
\end{theorem}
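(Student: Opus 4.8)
The plan is to reduce everything to Fatou's theorem (Theorem \ref{t54}) by applying it to the two nonnegative auxiliary sequences $g+f_n$ and $g-f_n$. The domination hypothesis $\abs{f_n}\le g$ means precisely that $-g\le f_n\le g$ a.e., so $g+f_n\ge 0$ and $g-f_n\ge 0$ a.e., and letting $n\to\infty$ gives $g+f_n\to g+f$ and $g-f_n\to g-f$.

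First I would verify that these two sequences satisfy the hypotheses of Fatou's theorem. Since $g,f_n\in L^1$ and $L^1$ is a vector lattice (Proposition \ref{p51}), each $g\pm f_n$ lies in $L^1$, and by linearity $\int g\pm f_n\ dx=\int g\ dx\pm\int f_n\ dx$. Because $\abs{\int f_n\ dx}\le\int g\ dx<\infty$, these integrals are bounded above by $2\int g\ dx<\infty$, so in particular their $\liminf$ is finite. Fatou's theorem therefore applies to both sequences and yields $g+f,\ g-f\in L^1$ together with
\begin{equation*}
\int g+f\ dx\le\liminf\int g+f_n\ dx
\qtq{and}
\int g-f\ dx\le\liminf\int g-f_n\ dx.
\end{equation*}

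Since $\int g\ dx$ is finite, I can subtract it from both sides. The first inequality becomes $\int f\ dx\le\liminf\int f_n\ dx$, while the second, after using $\liminf(-a_n)=-\limsup a_n$, becomes $\limsup\int f_n\ dx\le\int f\ dx$. Combining them gives
\begin{equation*}
\limsup\int f_n\ dx\le\int f\ dx\le\liminf\int f_n\ dx,
\end{equation*}
which forces equality throughout, hence $\int f_n\ dx\to\int f\ dx$. The membership $f\in L^1$ is immediate from $g+f\in L^1$ (produced by Fatou) together with $-g\in L^1$ and the vector space structure, since $f=(g+f)+(-g)$.

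The argument is essentially routine once the two-sided Fatou trick is in place, so I do not anticipate a genuine obstacle. The one point requiring care is the legitimacy of subtracting $\int g\ dx$, which rests on the finiteness of $\int g\ dx$ and the boundedness of the sequence $\int f_n\ dx$; both follow from $g\in L^1$ and $\abs{f_n}\le g$, and this is exactly where the integrability of the dominating function $g$ enters.
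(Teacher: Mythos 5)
Your proposal is correct and follows essentially the same route as the paper's own proof: applying Fatou's theorem to the nonnegative sequences $(g+f_n)$ and $(g-f_n)$, deducing $g\pm f\in L^1$ and hence $f\in L^1$ from the vector space structure, and subtracting the finite quantity $\int g\ dx$ to trap $\int f\ dx$ between $\limsup\int f_n\ dx$ and $\liminf\int f_n\ dx$. Your additional care in checking the finiteness of the $\liminf$ hypothesis of Fatou's theorem is a welcome detail that the paper leaves implicit.
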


\section{Measurable functions}\label{s6}

We may simplify the manipulation of integrable functions by introducing the notion of measurability.
This will also allow us to precise the structure of the family $\mm$ of measurable sets, and to generalize Fatou's theorem for functions having infinite integrals.

Following Riesz we call a function $f:\RR\to\RRR$  \emph{measurable} if there exists a sequence $(\varphi _n)\subset R_0$  such that $\varphi _n\to f$ a.e.

\begin{proposition}\label{p61} \mbox{} 
\begin{enumerate}[\upshape (i)]
\item If $f\in R_2$, then $f$ is measurable.
\item If $f$ and $g$ are measurable, then $|f|$, $fg$, $\max\set{f,g}$ and $\min\set{f,g}$ are also measurable. 
Furthermore, $f/g$ and $f\pm g$ are also measurable whenever they are defined a.e.\end{enumerate}
\end{proposition}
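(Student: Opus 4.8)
The plan is to exhibit, for each of the functions in question, an explicit sequence of step functions converging to it almost everywhere, exploiting that the elementary operations are continuous on $\RRR$ wherever their value is unambiguous. For part (i), if $f\in R_2$ I write $f=f_1-f_2$ with $f_1,f_2\in R_1$ as in the definition of $R_2$ and fix $(\varphi_n),(\psi_n)\subset R_0$ with $\varphi_n\nearrow f_1$ and $\psi_n\nearrow f_2$. Each $\varphi_n-\psi_n$ is again a step function, and $\varphi_n-\psi_n\to f$ a.e.: at a.e.\ $x$ at least one of $f_1(x),f_2(x)$ is finite by Lemma B, so $f(x)=f_1(x)-f_2(x)$ is well defined and $\varphi_n(x)-\psi_n(x)\to f_1(x)-f_2(x)$ by continuity of subtraction on $\RRR$ away from $(\pm\infty,\pm\infty)$. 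Hence $f$ is measurable.

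For the continuous operations in part (ii), let $\varphi_n\to f$ and $\psi_n\to g$ a.e.\ with $(\varphi_n),(\psi_n)\subset R_0$. Since $\abs{\cdot}$, $\max$ and $\min$ are jointly continuous on $\RRR$, including at $\pm\infty$, the step functions $\abs{\varphi_n}$, $\max\set{\varphi_n,\psi_n}$ and $\min\set{\varphi_n,\psi_n}$ converge a.e.\ to $\abs{f}$, $\max\set{f,g}$ and $\min\set{f,g}$, which settles those three. For $f\pm g$ the hypothesis that the combination is defined a.e.\ means the forbidden cases $(\pm\infty)\mp(\pm\infty)$ occur only on a null set, so $\varphi_n\pm\psi_n\to f\pm g$ a.e.\ by continuity of addition where it is defined. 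The quotient $f/g$ I would reduce to a product once $1/g$ is known measurable; since $f/g$ is assumed defined a.e.\ we have $g\neq0$ a.e., and $1/g$ is then handled exactly like the product below.

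The product $fg$ is the one genuine difficulty. Under the convention $0\cdot(\pm\infty)=0$ the multiplication map on $\RRR\times\RRR$ is \emph{discontinuous} at $(0,\pm\infty)$ and $(\pm\infty,0)$, so on the set where one factor vanishes and the other is infinite the naive products $\varphi_n\psi_n$ need not converge to $fg$; and because a measurable function may be infinite on a set of positive measure, this set need not be null. I would circumvent this by truncation: the functions $f_k:=\med\set{-k,f,k}$ and $g_k:=\med\set{-k,g,k}$ are finite and measurable, being built from $f,g$ by the $\max$/$\min$ operations just treated; their products $f_kg_k$ are measurable by the direct step-function argument, since no infinities intervene; and one checks that $f_kg_k\to fg$ pointwise everywhere (at a point with $f=+\infty$, $g=0$ one has $f_kg_k=k\cdot 0=0=fg$). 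It then remains only to pass to the limit in $k$, and the same scheme, applied to $1/g$, disposes of the quotient.

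The crux is therefore the closure of the class of measurable functions under a.e.\ pointwise limits, which is what promotes $f_kg_k\to fg$ into measurability of $fg$. I expect this to be the hard step, since a.e.\ convergence does not diagonalize naively. My plan for it is to compose with the homeomorphism $\sigma(t):=t/(1+\abs{t})$ of $\RRR$ onto $[-1,1]$ (with $\sigma(0)=0$), reducing to a uniformly bounded sequence $H_k:=\sigma\circ h_k\to H:=\sigma\circ h$, and then to localize on the finite intervals $[-m,m]$: a bounded measurable function supported there lies in $L^1$ (truncate its step-function approximants between $\pm\chi_{[-m,m]}$ and apply Theorem \ref{t55}), so the Dominated Convergence Theorem \ref{t55} gives $H\chi_{[-m,m]}\in L^1$, hence measurable by part (i). Assembling these into a single global sequence is the delicate point: I would choose step functions $L^1$-close to each $H\chi_{[-m,m]}$ and extract an a.e.-convergent diagonal by the Borel--Cantelli argument for fast $L^1$-convergence, using the norm of Corollary \ref{c53} and the Beppo Levi Theorem \ref{t52}. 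Making this diagonalization clean, rather than the routine continuity computations, is where the real work of the proposition resides.
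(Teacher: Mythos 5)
Your part (i) and your treatment of $\abs{f}$, $\max$, $\min$ and $f\pm g$ coincide with the paper's proof, which simply pushes each operation through the approximating step functions and declares the remaining cases ``analogous''. Where you diverge is the product, and your instinct there is sound: under the convention $0\cdot(\pm\infty)=0$ multiplication is discontinuous at $(0,\pm\infty)$, and since measurable functions here may be infinite on non-null sets, the set where the naive products $\varphi_n\psi_n$ fail to converge to $fg$ need not be null. The paper's one-line ``analogous'' does not engage with this, so you are being more careful than the source. However, your repair is far heavier than necessary and brushes against a circularity you should flag: closure of measurability under a.e.\ limits is exactly Proposition \ref{p62} (iv), whose proof in the paper \emph{uses} Proposition \ref{p61} (ii), so you cannot cite it and must establish your limit-closure independently. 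That is what your last paragraph attempts, and it is precisely there that the argument stops being a proof: the $L^1$-approximation-plus-diagonal-extraction step is announced as ``the delicate point'' but not carried out. As written, the product case therefore has a gap at its crux.

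The gap is avoidable with tools already in hand, because the truncations can be made \emph{monotone}. Reduce to $f,g\ge 0$ by writing $fg=f^+g^+ + f^-g^- - f^+g^- - f^-g^+$; at each point at most one term is nonzero, so no $\infty-\infty$ occurs and the already-proved sum/difference case applies. For nonnegative $f,g$ set $h_k:=\min\set{f,k}\min\set{g,k}\chi_{[-k,k]}$. Each $h_k$ is measurable by the direct step-function argument (both factors have finite limits, so no discontinuity of multiplication intervenes), is dominated by $k^2\chi_{[-k,k]}\in L^1$, and hence lies in $L^1$ by the $\med$/dominated-convergence device you already describe. Since $h_k\nearrow fg$ pointwise (including $k\cdot 0=0$ at the $0\cdot\infty$ points) and $\int h_k\ dx\ge 0>-\infty$, Theorem \ref{t42} gives $fg\in R_2$, and part (i) gives measurability. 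Monotonicity replaces your diagonal extraction entirely. For the quotient, note that $\chi_{\set{\psi_n\ne 0}}/\psi_n$ is itself a step function converging a.e.\ to $1/g$ when $g\ne 0$ a.e., so only the product step was ever at issue.
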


\begin{proof}
(i) Write $f=f_1-f_2$ with $f_1, f_2\in R_1$.
If $(\varphi_n), (\psi_n)\subset R_0$ and $\varphi_n\nearrow f_1$,  $\psi_n\nearrow f_2$, then $(\varphi_n-\psi_n)\subset R_0$, and $\varphi_n-\psi_n\to f$.
\medskip

(ii) If $(\varphi_n)\subset R_0$ and $\varphi_n\to f$, then $(\abs{\varphi_n})\subset R_0$ and $\abs{\varphi_n}\to \abs{f}$.
The proof of the other statements is analogous. 
\end{proof}

The sign function shows that the not every measurable function belongs to $R_2$.
In our next result we collect several useful statements, including a partial converse of Proposition \ref{p61} (i), the description of the family $\mm$ of measurable sets, and a generalization of Fatou's theorem for functions having infinite integrals.
The proofs will rely on Lebesgue's theorem.

We recall that by a $\sigma$-\emph{ring} in $X$ we mean a family $\mm$ of subsets of $X$ containing $\varnothing$, the difference $A\setminus B$ of any two sets $A,B\in \mm$, and the union $\cup A_n$ of any disjoint sequence $(A_n)\subset\mm$.
Then we have $\cup A_n\in\mm$ and $\cap A_n\in\mm$ for any countable sequence $(A_n)\subset\mm$, even if it is not disjoint.

\begin{proposition}\label{p62}\mbox{} 
\begin{enumerate}[\upshape (i)]
\item If $f$ is measurable, $g\in L^1$ and $\abs{f}\le g$, then $f\in L^1$.
\item If $f$ is measurable \emph{and nonnegative}, then $f\in R_2$.
\item The measurable sets form a $\sigma$-ring.
\item If $(f_n)$ is a sequence of measurable functions and $f_n\to f$, then $f$ is measurable.
\item Let $(f_n)$ be a sequence of nonnegative measurable functions. 
If $f_n\to f$, then $f$ is also a nonnegative measurable function, and
\begin{equation}\label{61}
\int f\ dx\le \liminf \int f_n\ dx.
\end{equation}
\end{enumerate}
\end{proposition}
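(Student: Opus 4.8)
The plan is to treat the five assertions in turn, using (i) as the workhorse that feeds all the others and leaning throughout on Lebesgue's theorem (Theorem \ref{t55}) and the generalized Beppo Levi theorem (Theorem \ref{t42}).

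For (i) I would truncate the approximating step functions against the dominating function. Choosing $(\varphi_n)\subset R_0$ with $\varphi_n\to f$ a.e., set $\psi_n:=\med\set{-g,\varphi_n,g}$. Since $R_0\subset L^1$ and $L^1$ is a vector lattice (Proposition \ref{p51}), each $\psi_n\in L^1$; moreover $\abs{\psi_n}\le g$, and because $\abs{f}\le g$ one has $\psi_n\to\med\set{-g,f,g}=f$ a.e. Lebesgue's theorem then gives $f\in L^1$. For (ii) I would approximate $f$ from below by integrable truncations: with the step functions $g_n:=n\chi_{[-n,n]}$, the functions $f_n:=\min\set{f,g_n}$ are measurable, nonnegative, and $\le g_n\in L^1$, hence $f_n\in L^1$ by (i); since $f_n\nearrow f$ and $\int f_n\ dx\ge 0>-\infty$, Theorem \ref{t42} yields $f\in R_2$.

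The main obstacle is (iv), closure under pointwise limits, precisely because the $f_n$ need not be dominated, so Lebesgue's theorem cannot be applied to them directly. My plan is to compress $\RRR$ onto a compact interval. The key auxiliary fact is that if $h$ is measurable and $G\colon\RRR\to\RRR$ is continuous with $G(0)=0$, then $G\circ h$ is measurable: if $(\varphi_n)\subset R_0$ and $\varphi_n\to h$, then $G\circ\varphi_n\in R_0$ (it takes the finitely many values $G(c_i)$, and $G(0)=0$ elsewhere) and $G\circ\varphi_n\to G\circ h$. I would apply this with $G(t):=t/(1+\abs{t})$, $G(\pm\infty):=\pm1$, so that $h_n:=G\circ f_n$ is measurable and bounded by $1$. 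Fixing a weight $w\in L^1$ with $w>0$ everywhere, part (i) gives $h_nw\in L^1$; since $\abs{h_nw}\le w$ and $h_nw\to (G\circ f)w$, Lebesgue's theorem gives $(G\circ f)w\in L^1$, whence $G\circ f=(G\circ f)w/w$ is measurable by Proposition \ref{p61}. Finally $f=G^{-1}\circ(G\circ f)$ with $G^{-1}$ continuous on $[-1,1]$ and $G^{-1}(0)=0$, so a second application of the auxiliary fact (after clamping the approximating step functions into $[-1,1]$) shows $f$ measurable.

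For (iii) I would argue at the level of characteristic functions, recalling that $A\in\mm$ means $\chi_A\in R_2$. The empty set is clear. For a difference, $\chi_{A\setminus B}=\chi_A\,(1-\chi_B)$ is measurable and nonnegative, hence lies in $R_2$ by (ii), so $A\setminus B\in\mm$. For a disjoint union $A=\bigcup A_n$, the partial sums $\chi_{A_1\cup\cdots\cup A_N}=\max\set{\chi_{A_1},\dots,\chi_{A_N}}$ lie in $R_2$ by Proposition \ref{p41}(vi), increase to $\chi_A$, and have nonnegative integrals, so Theorem \ref{t42} gives $\chi_A\in R_2$. For (v), $f$ is nonnegative and measurable by (iv), hence in $R_2$ by (ii). Setting $g_n:=\inf_{k\ge n}f_k$, which is measurable by (iv) as a pointwise limit of the finite minima and nonnegative, hence in $R_2$, one has $g_n\le f_k$ for $k\ge n$ and $g_n\nearrow f$. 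Theorem \ref{t42} then gives $\int g_n\ dx\nearrow\int f\ dx$, while $\int g_n\ dx\le\inf_{k\ge n}\int f_k\ dx$ by monotonicity (Proposition \ref{p41}(iii)); letting $n\to\infty$ yields \eqref{61}.
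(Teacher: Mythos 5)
Your argument is correct throughout, and for parts (i), (ii) and (iii) it is essentially the paper's proof (the paper phrases (ii) with general sets $A_n$ of finite measure rather than $[-n,n]$, with a view to the generalization of Section \ref{s7}, and treats the disjoint union in (iii) by the same appeal to Theorem \ref{t42} that you spell out). In (iv) the paper performs your compression and weighting in a single stroke: with a positive $g\in L^1$ it sets $h_n:=gf_n/(g+\abs{f_n})$, so that $\abs{h_n}\le g$, applies Lebesgue's theorem, and then recovers $f=gh/(g-\abs{h})$ via Proposition \ref{p61}(ii); your $h_n=(G\circ f_n)w$ with $G(t)=t/(1+\abs{t})$ is the same device in two steps. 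The one point you should tighten is the final decompression: $G^{-1}$ sends $\pm 1$ to $\pm\infty$, so if you clamp the step functions approximating $G\circ f$ into $[-1,1]$ exactly, their images under $G^{-1}$ need not be (finite-valued) step functions; clamp into $[-(1-1/n),\,1-1/n]$ instead, or recover $f$ algebraically as the paper does. Finally, for (v) you take a genuinely different and self-contained route: the paper reduces to Fatou's theorem (Theorem \ref{t54}), which tacitly requires passing to a subsequence of $(f_n)$ with finite integrals so as to land in $L^1$, whereas your argument with $g_n=\inf_{k\ge n}f_k$ and the generalized Beppo Levi theorem works directly in $R_2$ and avoids that reduction; the price is having to justify the measurability of $g_n$, which you correctly obtain from (iv) applied to the decreasing finite minima.
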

Part (ii) and Proposition \ref{p61} (i) justify the terminology of Section \ref{s4}: a set is measurable if and only if its characteristic function is measurable.

\begin{proof}
(i) If $(\varphi _n)\subset R_0$ and $\varphi _n\to f$, then the functions\footnote{Here $\med\set{x,y,z}$ denotes the middle number among $x$, $y$ and $z$. For $x\le z$ it is equal to $\max\set{x,\min\set{y,z}}$.}
\begin{equation*}
f_n:=\med\set{-g,\varphi _n,g}
\end{equation*}
belong to the \emph{lattice} $L^1$, and $f_n\to f$.
Since $|f_n|\le g$ for all $n$, we may conclude by applying Lebesgue's theorem.
\medskip

(ii) Choose a non-decreasing sequence $A_1\subset A_2\subset\cdots$ of sets of finite measure such that $f=0$ outside their union.
The functions 
\begin{equation*}
f_n(x):=\min\set{f(x),n\chi_{A_n}}
\end{equation*}
belong to $L^1$ by (i), and $f_n\nearrow f$. 
We conclude by applying Theorem \ref{t42}.
\medskip 

(iii) We have $\varnothing\in\mm$ because $\chi_{\varnothing}=0\in R_0\subset R_2$.
Theorem \ref{t42} yields that $\cup A_n\in\mm$ for any disjoint sequence $(A_n)\subset\mm$.

It remains to show that if $A,B\in \mm$, then $A\setminus B\in\mm$.
By (ii) it suffices to observe that if $\chi_A, \chi_B$ are measurable, then $\chi_{A\setminus B}=\chi_A-\chi_A\chi_B$ is also measurable.
\medskip 

(iv) Fix a \emph{positive} function $g\in L^1$, and set
\begin{equation*}
h_n:=\frac{gf_n}{g+|f_n|}\qtq{and}  h:=\frac{gf}{g+|f|}.
\end{equation*}
Then $h_n$ is measurable and $|h_n|\le g$, so that $h_n\in L^1$ by (i).
Since $h_n\to h$, by Lebesgue's theorem $h\in L^1$, and hence $h$ is measurable by (i).
Since $f$ and $h$ have the same sign, $|f|h=f|h|$, and hence
\begin{equation*}
f=\frac{gh}{g-|h|}
\end{equation*}
is also measurable by Proposition \ref{p61} (ii).
\medskip 

(v) $f$ is a nonnegative measurable function by (iv), hence all integrals in  \eqref{61} are defined by (ii). 
The relation \eqref{61} is obvious if the right side of \eqref{61} is finite; otherwise it follows from Fatou's theorem.\end{proof}

\section{Generalization to arbitrary measure spaces}\label{s7}

The above construction of the Lebesgue integral may be greatly generalized a follows.
Let $\mu:\pp\to\RR$ be a \emph{finite} measure on a semiring $\pp$ in a set $X$, i.e., on a family of sets $\pp\subset 2^X$ having the following properties:
\begin{itemize}
\item $\varnothing \in \pp$;
\item if $P,Q\in \pp $, then $P\cap Q\in \pp $;
\item if $P,Q\in \pp $, then there is a finite disjoint sequence $P_1,\ldots,P_n$  in $\pp$ such that $P\setminus Q=P_1\cup \dots\cup P_n$.
\end{itemize}

Two simple examples are the length of bounded intervals in $\RR$ and the \emph{counting measure} on the finite subsets of any given set $X$: $\mu(P)$ is the number of elements of $P$.

We say that $N\subset X$ is a \emph{null set} if for each $\eps>0$ there exists a sequence $(P_n)\subset\pp$ satisfying $N\subset\cup P_n$ and $\sum\mu(P_n)<\eps$.
We identify two functions if they are equal a.e., i.e., outside a null set.

We denote by $R_0$ the vector space of \emph{step functions} spanned by the characteristic functions of the sets $P\in\pp$, and we define the integral of step functions by the formula
\begin{equation*}
\int \sum_{i=1}^nc_i\chi_{P_i}\ d\mu:=\sum_{i=1}^nc_i\mu(P_i).
\end{equation*}

Now we may repeat the above construction of the integral and measure, and all theorems of Sections \ref{s2}--\ref{s6} remain valid.
Moreover, only the proof of Proposition \ref{p62} (iv) has to be modified because there are measure spaces  containing no positive measurable functions.
It suffices to use a \emph{nonnegative} function $g\in L^1$ satisfying 
\begin{equation*}
g(x)=0\Longrightarrow f_n(x)=0\qtq{for all}n,
\end{equation*} 
and setting $h_n=h:=0$ when $g=0$.
See, e.g., \cite{Riesz-Nagy-1952} or \cite{Komornik-2016} for details.

\section{The Fubini--Tonelli theorem}\label{s8}

Given two finite measures $\mu:\pp\to\RR$ and $\nu:\qq\to\RR$ where $\pp$ is a semiring in $X$ and $\qq$ is a semiring in $Y$, 
\begin{equation*}
\pp\times\qq:=\set{P\times Q\ :\ P\in\pp\qtq{and}Q\in\qq}
\end{equation*}
is a semiring in the product space $\times Y$, and the formula 
\begin{equation*}
(\mu\times\nu)(P\times Q):=\mu(P)\nu(Q)
\end{equation*}
defines a finite measure on $\pp\times\qq$.

We are going to describe the relationship between the three corresponding integrals.
The notation $R_i(X)$, $R_i(X)$, $R_i(X\times Y)$ will refer to the spaces $R_i$ for the measures $\mu$, $\nu$ and $\mu\times\nu$, respectively, and we will write $dx$, $dy$ and $dx\ dy$ instead of $d\mu$, $d\nu$ and $d(\mu\times\nu)$.

\begin{theorem}[Fubini--Tonelli]\label{t81}
We have 
\begin{equation}\label{81}
\int _{X\times Y}f(x,y)\ dx\ dy=\int _X \Bigl(\int _Y f(x,y)\ dy\Bigr)\ dx
\end{equation}
whenever the left side is defined, i.e., $f\in R_2(X\times Y)$.
\end{theorem}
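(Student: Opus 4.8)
The plan is to mirror the construction itself, establishing \eqref{81} successively on $R_0(X\times Y)$, $R_1(X\times Y)$ and $R_2(X\times Y)$, after first isolating the one genuinely two-dimensional fact that everything rests on: that $\mu\times\nu$-null sets have $\nu$-null sections. I would begin with step functions. By bilinearity it suffices to treat $\varphi=\chi_{P\times Q}$ with $P\in\pp$, $Q\in\qq$; here $\varphi(x,y)=\chi_P(x)\chi_Q(y)$, so for each fixed $x$ the section $y\mapsto\varphi(x,y)$ is a step function on $Y$ whose integral is $\chi_P(x)\nu(Q)$, itself a step function on $X$, and integrating once more gives $\mu(P)\nu(Q)=(\mu\times\nu)(P\times Q)=\int_{X\times Y}\varphi\ dx\ dy$. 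Extending by linearity, \eqref{81} holds for every $\varphi\in R_0(X\times Y)$, and the inner integral $x\mapsto\int_Y\varphi(x,y)\ dy$ is always a genuine step function in $R_0(X)$.

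Next I would prove the section lemma: if $N\subset X\times Y$ is $\mu\times\nu$-null, then $N_x:=\set{y:(x,y)\in N}$ is $\nu$-null for a.e. $x$. For each $k$ choose a cover $N\subset\bigcup_n P_{k,n}\times Q_{k,n}$ with $\sum_n\mu(P_{k,n})\nu(Q_{k,n})<2^{-k}$, and put $g:=\sum_{k,n}\chi_{P_{k,n}\times Q_{k,n}}$. Its partial sums are step functions increasing to $g$, so $g\in R_1(X\times Y)$ with $\int_{X\times Y}g\ dx\ dy\le1$; by the step-function case and the definition of the $R_1$-integral (used in $Y$ and then in $X$), the function $G(x):=\int_Y g(x,y)\ dy$ lies in $R_1(X)$ and $\int_X G\ dx=\int_{X\times Y}g\ dx\ dy\le1$. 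Hence $G$ is finite a.e.; for such $x$ the section $g(x,\cdot)\in R_1(Y)$ has finite integral and is therefore finite a.e. by Lemma B. But $g=\infty$ on $N$, so $N_x$ is contained in the null set where $g(x,\cdot)=\infty$. This argument uses only the step-function case, so there is no circularity.

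Armed with the lemma I would pass to $R_1(X\times Y)$. Given $\varphi_n\nearrow f$ with $\varphi_n\in R_0(X\times Y)$, the set $N$ where $\varphi_n(x,y)\nearrow f(x,y)$ fails is null, so for a.e. $x$ the section $N_x$ is null and $\varphi_n(x,\cdot)\nearrow f(x,\cdot)$ holds a.e. on $Y$; thus $f(x,\cdot)\in R_1(Y)$ and $\int_Y f(x,y)\ dy=\lim_n\int_Y\varphi_n(x,y)\ dy$ for a.e. $x$. Applying Proposition \ref{p33} to the non-decreasing sequence of step functions $x\mapsto\int_Y\varphi_n(x,y)\ dy$ shows that $x\mapsto\int_Y f(x,y)\ dy$ belongs to $R_1(X)$ with integral $\lim_n\int_{X\times Y}\varphi_n\ dx\ dy=\int_{X\times Y}f\ dx\ dy$, which is \eqref{81}.

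Finally, for $f=f_1-f_2\in R_2(X\times Y)$ the previous step gives \eqref{81} for $f_1,f_2\in R_1(X\times Y)$, with inner integrals $F_i(x):=\int_Y f_i(x,y)\ dy\in R_1(X)$ and $\int_X F_i\ dx=\int_{X\times Y}f_i\ dx\ dy$. Since the left side of \eqref{81} is defined, one of these is finite, say $\int_{X\times Y}f_2\ dx\ dy<\infty$; then $F_2\in L^1(X)$ is finite a.e., so for a.e. $x$ we have $f_2(x,\cdot)\in L^1(Y)$, whence $f(x,\cdot)=f_1(x,\cdot)-f_2(x,\cdot)\in R_2(Y)$ is defined a.e. with $\int_Y f\ dy=F_1-F_2$, a difference free of the form $\infty-\infty$. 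Subtracting the two instances of \eqref{81} and invoking linearity on $R_2(X)$ then yields the identity. The main obstacle is the section lemma for null sets, on which all the later bookkeeping depends; the only further care required is the a.e.-finiteness guaranteed by the hypothesis ``the left side is defined,'' which is exactly what keeps every difference of integrals meaningful.
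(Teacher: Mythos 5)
Your proposal is correct and follows essentially the same route as the paper: establish \eqref{81} on $R_0(X\times Y)$, prove that $\mu\times\nu$-null sets have $\nu$-null sections for a.e.\ $x$ (the paper's Lemma~C, which you prove by an equivalent argument, summing the rectangle covers into a function $g\in R_1(X\times Y)$ of finite integral rather than invoking the Beppo Levi theorem for series), then pass to $R_1(X\times Y)$ by monotone limits and to $R_2(X\times Y)$ by taking differences, using Lemma~B exactly where the paper does to keep $F_1-F_2$ well defined.
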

We emphasize that the integrals in \eqref{81} may be infinite.

\begin{remark}
Under the same assumption we also have 
\begin{equation*}
\int _{X\times Y}f(x,y)\ dx\ dy=\int _Y \Bigl(\int _X f(x,y)\ dx\Bigr)\ dy
\end{equation*}
by symmetry.
\end{remark}

\begin{proof}
We may repeat the proof given in \cite{Riesz-Nagy-1952} for the special case $f\in C_2(X\times Y)=L^1(X\times Y)$; it becomes even simpler because we do not have to check the boundedness of the sequences of integrals. 
\end{proof}

\begin{example}
We  recall that the theorem does not hold under the weaker assumption that both successive integrals exist and are equal. 
For example, let  $\mu=\nu$ be the counting measure on $X=Y:=\ZZ$, and 
\begin{equation*}
f(x,y):= 
\begin{cases}
1&\text{if $x=y+1$,}\\
-1&\text{if $x=y-1$,}\\
0&\text{otherwise}.
\end{cases}
\end{equation*}
Then 
\begin{equation*}
\int _X \Bigl(\int _Y f(x,y)\ dy\Bigr)\ dx=\int _Y \Bigl(\int _X f(x,y)\ dx\Bigr)\ dy=0,
\end{equation*}
but the integral $\int _{X\times Y}f(x,y)\ dx\ dy$ is undefined.
\end{example}

\section{$\sigma$-ring or $\sigma$-algebra?}\label{s9}

In this section we argue in favor of the measurability \emph{\`a la} Riesz, and the $\sigma$-rings \emph{\`a la} Fr\'echet \cite{Frechet-1915}, instead of $\sigma$-algebras, i.e., $\sigma$-rings containing the fundamental set $X$. 

The measurability notion adopted here differs from the one used in most modern texts. 
They coincide if $X$ has a countable cover by sets of finite measure (or equivalently by sets belonging to $\pp$), like the Lebesgue measure in $\RR^n$ and the  probability measures.

Otherwise, like for the counting measure on an uncountable set $X$, the present definition is more restrictive: for example the constant functions are not measurable.

In the latter case it is tempting to adopt a weaker definition, by calling a function $f$ \emph{locally measurable} if $f\chi_P$ is measurable for all $P\in\pp$.
Indeed, we may extend the integral to locally measurable functions $f$ by setting $\int f\ dx:=\infty$ if $f$ is nonnegative and non-measurable, and then setting $\int f\ dx:=\int f_+\ dx-\int f_-\ dx$ whenever the right side is well defined.
This extended integral is still monotone.

However, the Fubini--Tonelli theorem may fail for locally measurable functions:

\begin{example}
Let $\mu$ be the zero measure and $\nu$ the counting measure on the finite subsets of an uncountable set $X$.
Then the characteristic function $f$ of the set
\begin{equation*}
D:=\set{(x,x)\ :\ x\in X}
\end{equation*}
is locally measurable for the product measure,
\begin{equation*}
\int _{X\times X}f(x,y)\ dx\ dy
=\infty
\qtq{and}
\int _X \Bigl(\int _X f(x,y)\ dx\Bigr)\ dy=0.
\end{equation*}
\end{example}

Next we take a closer look of Theorem \ref{t43}. 
It may be shown (see, e.g., \cite{Halmos-1950}, Chapter 13) that the measure $\mu:\mm\to\RR$ is the \emph{only} possible extension of its restriction to the initial semiring $\pp$.

If $X$ is measurable, then $\mm$ is not only a $\sigma$-ring, but also a $\sigma$-\emph{algebra}.
Otherwise we may extend $\mu$ further to a $\sigma$-algebra $\mmm$ by setting $\overline{\mu}(A):=\int\chi(A)\ dx$ whenever the characteristic function of $A$ is \emph{locally} measurable.
However, this extension is not unique in general:

\begin{example}
Consider the zero measure $\mu$ on the semiring $\pp$ of  finite subsets of an uncountable set $X$.
Then $\mmm=2^X$, and
\begin{equation*}
\overline{\mu}(A)=
\begin{cases}
0&\text{if $A$ is countable,}\\
\infty&\text{if $A$ is uncountable.}
\end{cases}
\end{equation*}
But the zero measure on $2^X$ is also an extension of $\mu$.

Moreover, the two measures  already differ on the smallest \emph{$\sigma$-algebra} $\aa $ containing $\pp$, i.e., on the family of countable subsets and their complements.
In fact,  there are infinitely many other extensions of $\mu$ to $\aa $: the formula
\begin{equation*}
\mu_{\alpha}(A)=
\begin{cases}
0&\text{if $A$ is countable,}\\
\alpha&\text{if $X\setminus A$ is countable}
\end{cases}
\end{equation*}
defines a different extension of $\mu$ for each $0\le\alpha\le\infty$.
\end{example}\remove{\footnote{L.\ Czách, private communication, 2005.}}

\section{Appendix}\label{s10}

For the convenience of the reader we reproduce here some known proofs that were admitted in the text.

\begin{proof}[Proof of Lemma A in $\RR$]
We may fix a compact interval $[a,b]$ and a  number $M>0$ such that $\varphi_1$, and hence all $\varphi_n$ are bounded by $M$ and vanish outside $[a,b]$.

For any fixed $\eps>0$ there exists a countable sequence of open intervals of total length $<\eps$ such that outside their union $U$ all $\varphi_n$ are continuous.

Then $\varphi_n\to 0$ uniformly on the compact set $[a,b]\setminus U$ by Dini's theorem, so that $0\le\varphi_n<\eps$ on this set for all sufficiently large $n$.
Then we also have
\begin{equation*}
0\le\int\varphi_n\ dx\le M\eps+\eps(b-a),
\end{equation*}
and the lemma follows by the arbitrariness of $\eps$.
\end{proof}
In the proof of Lemma A for a general measure we may use the $\sigma$-additivity of the measure instead of  the topological (compactness) arguments: see \cite{Riesz-Nagy-1952} or \cite{Komornik-2016}.

\begin{proof}[Proof of Lemma B]
Changing $\varphi_n$ to $\varphi_n-\varphi_1$ we may assume that the functions $\varphi_n$ are nonnegative.
Fix an upper bound $A>0$ of the integrals $\int\varphi_n\ dx$, and set
\begin{equation*}
E_{\eps}:=\set{x\in\RR\ :\ f(x)>\frac{A}{\eps}},\quad \eps>0.
\end{equation*}

Since $\set{x\in\RR\ :\ f(x)=\infty}\subset E_{\eps}$, it suffices to show that each $E_{\eps}$ has a countable cover by intervals of total length $\le\eps$.

Setting $f_0:=0$ and
\begin{equation*}
E_{\eps,n}:=\set{x\in\RR\ :\ f_n(x)>\frac{A}{\eps}\ge f_{n-1}(x)},\quad n=1,2,\ldots,
\end{equation*}
$E_{\eps}$ is the union of the disjoint sets $E_{\eps,n}$ by the monotonicity of $(f_n)$.
Since each $E_{\eps,n}$ is a finite union of disjoint intervals $I_{n,1},\ldots,I_{n,k_n}$, it remains to show that 
\begin{equation*}
\sum_{n=1}^{\infty}\sum_{k=1}^{k_n}\abs{I_{n,k}}\le\eps.
\end{equation*}

By the definition of $E_{\eps,n}$ we have for each $m=1,2,\ldots$  the inequality 
\begin{equation*}
\frac{A}{\eps}\sum_{n=1}^m\sum_{k=1}^{k_n}\abs{I_{n,k}}
\le \sum_{n=1}^m\int_{E_n}\varphi_n\ dx 
\le\int\varphi_m\ dx
\le A,
\end{equation*}
and we conclude by letting $m\to\infty$.
\end{proof}

\begin{proof}[Proof of Fatou's theorem \ref{t54}]
Setting 
\begin{equation*}
h_n:=\inf\set{f_k\ :\ k\ge n}
\end{equation*}
we have $h_n\nearrow f$.
Since $h_n\le f_n$ and hence $\sup\int h_n\ dx<\infty$ by our assumption $\liminf \int f_n\ dx<\infty$, the relation $f\in L^1$ will follow by applying Theorem \ref{t52} if we show that $(h_n)\subset L^1$. 

For each fixed $n$ we set 
\begin{equation*}
h_{n,m}:=\min\set{f_k\ :\ n\le k\le m},\quad m=n, n+1,\ldots .
\end{equation*}
Then $(h_{n,m})_{m=n}^{\infty}\subset L^1$ by Proposition \ref{p51} (i).
Since $h_{n,m}\searrow h_n$, and $\int h_{n,m}\ dx\ge 0$ for all $m$, we may apply Theorem \ref{t52} to get $-h_n\in L^1$, and hence $h_n\in L^1$.

Finally, applying  Theorem \ref{t42} and using the relations $h_n\le f_n$ we get the required estimate:
\begin{equation*}
\int f\ dx=\lim\int h_n\ dx=\liminf\int h_n\ dx\le \liminf \int f_n\ dx.\qedhere
\end{equation*} 
\end{proof}

\begin{proof}[Proof of Lebesgue's theorem \ref{t55}]
Applying Fatou's Theorem to the sequences $(g-f_n)$ and $(g+f_n)$ we obtain that $g-f, g+f\in L^1$ and 
\begin{align*}
\int g-f\ dx&\le \liminf \int g-f_n\ dx=\int g\ dx-\limsup \int f_n\ dx,\\
\int g+f\ dx&\le \liminf \int g+f_n\ dx=\int g\ dx+\liminf \int f_n\ dx.
\end{align*}
Since $L^1$ is a vector space, hence $f\in L^1$, and
\begin{equation*}
\limsup\int f_n\ dx\le\int f\ dx\le \liminf \int f_n\ dx.\qedhere
\end{equation*}
\end{proof}

Let us give finally the complete proof of the Fubini--Tonelli theorem.
First we recall from \cite{Riesz-Nagy-1952} an elementary lemma clarifying the relationship between one- and two-dimensional null sets:

\begin{LC}\label{LC}
If $E$  is a null set in $X\times Y$, then the ``sections''
\begin{equation*}
\set{y\in Y \ :\ (x,y)\in E}
\end{equation*}
of $E$ are null sets in $Y$ for almost every  $x\in X$.
\end{LC}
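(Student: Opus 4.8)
The plan is to encode the $\nu$-measure of the $x$-section of a rectangular cover of $E$ as a single nonnegative function on $X$ lying in $R_1$, and then to run a Borel--Cantelli type summation over a sequence of increasingly fine covers. First I would fix, for each $k$, rectangles $P^k_n\times Q^k_n\in\pp\times\qq$ with $E\subset\bigcup_n(P^k_n\times Q^k_n)$ and $\sum_n\mu(P^k_n)\nu(Q^k_n)<2^{-k}$; this is possible because $E$ is a null set for $\mu\times\nu$ and $(\mu\times\nu)(P\times Q)=\mu(P)\nu(Q)$. Next I would introduce $g_k(x):=\sum_n\nu(Q^k_n)\chi_{P^k_n}(x)$. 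Its partial sums are step functions in $R_0$, non-decreasing because the coefficients $\nu(Q^k_n)$ are nonnegative, so $g_k\in R_1$ and $\int g_k\ dx=\sum_n\mu(P^k_n)\nu(Q^k_n)<2^{-k}$ by the very definition of the integral on $R_1$. The role of $g_k$ is geometric: for every fixed $x$ the section $E_x:=\set{y\in Y\ :\ (x,y)\in E}$ is contained in $\bigcup_{n:\,x\in P^k_n}Q^k_n$, a countable family of sets of $\qq$ whose total $\nu$-measure equals $g_k(x)$.

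Then I would sum. The partial sums of $G:=\sum_k g_k$ are non-decreasing, belong to $R_1$ by Proposition \ref{p31}, and have integrals bounded by $\sum_k 2^{-k}=1$, so Theorem \ref{t52} gives $G\in L^1$ with finite integral. Hence $G<\infty$ a.e.\ by Lemma B, which forces $g_k(x)\to 0$ for almost every $x$. For each such $x$ and each $\delta>0$, picking $k$ with $g_k(x)<\delta$ exhibits a cover of $E_x$ by sets of $\qq$ of total $\nu$-measure $<\delta$; thus $E_x$ is a null set in $Y$ for almost every $x$, which is the claim.

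The main difficulty is conceptual rather than computational: a single cover only bounds some cover of $E_x$ by the finite quantity $g(x)$, which need not be small, so one cannot conclude nullity of $E_x$ from one cover alone. The device that upgrades ``small for most $x$'' to ``arbitrarily small for almost every $x$'' is precisely the summation of a rapidly decreasing sequence of covers, turning the finiteness of $\int G\ dx$ into the pointwise decay $g_k\to 0$. The only technical points---that each $g_k$ lies in $R_1$ with the expected integral and that $G$ remains in $L^1$---both reduce to the monotone-convergence stability of $R_1$ already established in Proposition \ref{p33} and Theorem \ref{t52}.
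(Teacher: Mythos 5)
Your proof is correct, and while it shares its engine with the paper's proof, it finishes differently. Both arguments rest on summing a sequence of rectangular covers of $E$ of total measures $2^{-k}$ and applying monotone convergence in the $X$ variable to the function $x\mapsto\sum_n\nu(Q_n)\chi_{P_n}(x)$ (which is exactly the paper's series $\sum_n\int_Y\chi_{R_n}(x,y)\,dy$, since that inner integral equals $\nu(Q_n)\chi_{P_n}(x)$). The paper, however, first merges all the covers into a single sequence $(R_n)$ covering every point of $E$ \emph{infinitely many times}, and then applies the Beppo Levi theorem a second time, now in the $Y$ variable, to conclude that for a.e.\ $x_0$ the series $\sum_n\chi_{R_n}(x_0,y)$ converges for a.e.\ $y$, so that the section $E_{x_0}$ is contained in the null set where this series diverges. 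You instead keep the covers separated by the index $k$, observe that the $k$-th cover induces a cover of the section $E_x$ by sets of $\qq$ of total $\nu$-measure exactly $g_k(x)$, and deduce from $\sum_k\int g_k\,dx<\infty$ that $g_k(x)\to 0$ for a.e.\ $x$, which exhibits arbitrarily small covers of $E_x$ directly from the definition of a null set in $Y$. Your finish is slightly more elementary --- one application of monotone convergence instead of two, and no need for the ``infinitely often'' covering device --- at the small cost of having to check that each $g_k$ lies in $R_1$ with integral $\sum_n\mu(P^k_n)\nu(Q^k_n)$, which you do correctly via the nonnegative, non-decreasing partial sums. Both routes are sound; the paper's never needs to name the quantity $g_k(x)$, while yours makes the Borel--Cantelli mechanism more transparent.
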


\begin{proof}
Fix a sequence of ``rectangles'' $R_n=P_n\times Q_n$ in $\pp\times \qq$, covering each point of $E$ infinitely many times, and satisfying
\begin{equation*}
\sum_{n=1}^{\infty} (\mu\times\nu)(R_n)<\infty.
\end{equation*}
We may get such a sequence by applying the definition of null sets with $\eps=2^{-n}$ for $n=1,2,\ldots,$ and then combining the resulting covers into one sequence.

By the definition of the integral of  step functions we have
\begin{equation*}
(\mu\times\nu)(R_n)=\int_{X\times Y}\chi_{R_n}(x,y)\ dx\ dy=\int_X\Bigl(\int_Y\chi_{R_n}(x,y)\ dy\Bigr)\ dx
\end{equation*}
(their common value is $\mu(P_n)\nu(Q_n)$), so that the series
\begin{equation*}
\sum_{n=1}^{\infty}  \int _X\Bigl(\int _Y\chi_{R_n}(x,y)\ dy\Bigr)\ dx
\end{equation*}
is convergent.
Applying the Beppo Levi theorem we obtain that the series
\begin{equation*}
\sum_{n=1}^{\infty}  \int _Y\chi_{R_n}(x,y)\ dy
\end{equation*}
is convergent for a.e.\ $x\in X$.
If $x_0$ is such a point, then another application of the Beppo Levi theorem implies that the series
\begin{equation*}
\sum_{n=1}^{\infty}  \chi_{R_n}(x_0,y)
\end{equation*}
is convergent for a.e.\ $y\in Y$.
If $y_0$ is such a point, then $(x_0,y_0)\notin E$, because at the points of $E$ we have $\sum \chi_{R_n}=\infty$.
\end{proof}

\begin{proof}[Proof of Theorem \ref{t81}]
We have to show the following: 
\begin{align}
&f(x,\cdot)\in R_2(Y)\qtq{for almost each}x\in X;\label{82}\\
&\int _Y f(\cdot,y)\ dy\in R_2(X);\label{83} \\
&\text{the two sides of \eqref{81} are equal.}\label{84}
\end{align}

These properties obviously hold if $f$ is the characteristic function of a ``rectangle'' $P\times Q$.
Taking linear combinations we see that they hold for all step functions $f\in R_0(X\times Y)$ as well.

Next let $f\in R_1(X\times Y)$.
Choose a sequence $(\varphi _n)\subset R_0(X\times Y)$ and a null set $E\subset X\times Y$ such that
\begin{equation*}
\varphi _n(x,y)\nearrow f(x,y)\qtq{for each} (x,y)\in (X\times Y)\setminus E;
\end{equation*}
then
\begin{equation}\label{85}
\int _{X\times Y}\varphi _n(x,y)\ dx\ dy\nearrow \int _{X\times Y}f(x,y)\ dx\ dy
\end{equation}
by the definition of the integral.

By Lemma C we may fix a set $X_1\subset X$ such that $X\setminus X_1$ is a null set in $X$, and for each fixed $x\in X_1$, $(\varphi _n(x,\cdot))\subset R_0(Y)$ and
\begin{equation*}
\varphi _n(x,\cdot)\nearrow f(x,\cdot)\qtq{in} Y,
\end{equation*}
implying $f(x,\cdot)\in R_1(Y)$ and the relation
\begin{equation*}
\int_Y\varphi _n(x,y)\ dy\nearrow\int_Y f(x,y)\ dy\qtq{for a.e.}x\in X.
\end{equation*}
Since $(\int_Y\varphi _n(\cdot,y)\ dy)\subset R_0(X)$, this implies $\int_Yf(\cdot,y)\ dy\in R_1(X)$ and the relation
\begin{equation}\label{86}
\int_X\left(\int_Y\varphi _n(x,y)\ dy\right)\ dx\nearrow\int_X\left(\int_Yf(x,y)\ dy\right)\ dx.
\end{equation}
In particular, we have established \eqref{82} and \eqref{83}.
The equality \eqref{84} follows by observing that the left sides of \eqref{85} and \eqref{86} are equal, and that \eqref{84} is already known for step functions.

If $f\in R_1(X\times Y)$ and $\int_{X\times Y}f(x,y)\ dx\ dy<\infty$, then applying Lemma B we also see that $f$ is finite a.e. in $X\times Y$, and $\int_Yf(x,y)\ dy$ is finite for every $x\in X_1$.

Finally, if $f\in R_2(X\times Y)$, then writing $f=f_1-f_2$ with $f_1, f_2\in R_1(X\times Y)$ we have the required equality for $f_1$ and $f_2$ in place of $f$.
Since one of the integrals $\int_{X\times Y}f_1(x,y)\ dx\ dy$ and $\int_{X\times Y}f_2(x,y)\ dx\ dy$ is finite, by the preceding paragraph we may take the difference of these equalities to obtain the required identity for $f$.
\end{proof}

We end the appendix by recalling a not too well-known  example of Weir \cite[p. 43]{Weir-1973} (see also Johnston \cite[pp. 54--55]{Johnston-2015}).
The strict inclusion $R_1\subsetneq R_2$ follows from the lower boundedness of the elements of $R_1$. 
The converse does not hold: a nonnegative element of $R_2$ does not necessarily belong to $R_1$.
To see this we enumerate the rational numbers in $(0,1)$ into a sequence $(r_n)$ and we set 
\begin{equation*}
S:=(0,1)\cap\left(\cup_{n=1}^{\infty}(r_n-2^{-n-3},r_n+2^{-n-3})\right).
\end{equation*}
Then $S\subset (0,1)$ and $0<\mu(S)<1$.

Now $f:=\chi_{(0,1)}$ and $g:=\chi_S$ belong to $R_1$, so that $f-g\in R_2$, and $f-g\ge 0$.
However, $f-g\notin R_1$ because $\int f-g\ dx>0$, and $\varphi\le 0$ for every step function satisfying $\varphi\le f-g$.

\end{document}